\newtheorem{thm}{Theorem}[section]
\newtheorem{dfn}[thm]{Definition}
\newtheorem{prop}[thm]{Proposition}
\newtheorem{lemma}[thm]{Lemma}
\begin{document}

\title{\textbf{Computable Isomorphisms for Certain Classes of Infinite Graphs}}
\author{Hakim J. Walker\thanks{Partially supported by NSF Grant DMS-1202328.}}
\date{\small Department of Mathematics\\
\small The George Washington University\\
\small Washington, DC 20052\\
\small hjwalker@gwu.edu\\
October 17, 2016}

\maketitle

\begin{abstract}
We investigate (2,1):1 structures, which consist of a countable set $A$ together with a function $f: A \to A$ such that for every element $x$ in $A$, $f$ maps either exactly one element or exactly two elements of $A$ to $x$. These structures extend the notions of injection structures, 2:1 structures, and (2,0):1 structures studied by Cenzer, Harizanov, and Remmel, all of which can be thought of as infinite directed graphs. We look at various computability-theoretic properties of (2,1):1 structures, most notably that of computable categoricity. We say that a structure $\mathcal{A}$ is computably categorical if there exists a computable isomorphism between any two computable copies of $\mathcal{A}$. We give a sufficient condition under which a (2,1):1 structure is computably categorical, and present some examples of (2,1):1 structures with different computability-theoretic properties. 
\end{abstract}

\section{Introduction}
In computable model theory, we study the properties of classical mathematical structures from the perspective of computability theory.  A set $X$ is \emph{computable} if there is a Turing program, or more generally, an algorithm, that can decide the membership of $X$. Furthermore, a set $X$ is \emph{computably enumerable} if there is an algorithm to enumerate the elements of $X$. A countable structure $\mathcal{A}$ over a finite language is \emph{computable} if its domain is computable and all of its functions and relations are computable. Unless otherwise specified, all of our structures are computable, and we assume that their domain is $\omega$, the set of natural numbers.

One of the key concepts in computable model theory is that of computable isomorphisms between structures. We say that two computable structures $\mathcal{A}$ and $\mathcal{B}$ that are isomorphic to each other are \emph{computably isomorphic} if there exists a computable function $h: \omega \to \omega$, where $h$ is an isomorphism from $\mathcal{A}$ to $\mathcal{B}$. Computable isomorphisms preserve not only the functions and relations of a structure, but also the algorithmic properties of the structure. It is very possible for two isomorphic computable structures to not be \emph{computably} isomorphic. Thus, we have the following definition.\\

\begin{dfn}
A computable structure $\mathcal{A}$ is \emph{computably categorical} if every two computable isomorphic copies of $\mathcal{A}$ are computably isomorphic.
\end{dfn}

We seek to classify computable structures up to computable isomorphism. That is, within a class of structures, we wish to provide characterizations of those computable structures that are computably categorical. This has been done for various classes of mathematical structures. For example, Goncharov and Dzgoev \cite{Autostability}, and independently Remmel \cite{linearOrders}, proved that a computable linear order is computably categorical if and only if it has only finitely many successor pairs. Additionally, Goncharov and Dzgoev \cite{Autostability}, LaRoche \cite{booleanAlgebras1}, and Remmel \cite{booleanAlgebras2} independently proved that a computable Boolean algebra is computably categorical if and only if it has only finitely many atoms. Goncharov, Lempp, and Solomon \cite{abelianGroups} characterized computably categorical ordered abelian groups as those with finite rank. In \cite{equivalenceStructures}, Calvert, Cenzer, Harizanov, and Morozov gave a characterization of computably categorical equivalence structures (structures consisting of a countable set and an equivalence relation).

Computable categoricity has also been extensively studied for certain types of graphs. Lempp, McCoy, Miller, and Solomon \cite{treesFiniteHeight} characterized computable trees of finite height that are computably categorical, and Miller \cite{treesInfiniteHeight} previously showed that no computable tree of infinite height is computably categorical. In \cite{SLFG}, Csima, Khoussainov, and Liu investigated computable categoricity of \emph{strongly locally finite graphs}, those which have countably many finite connected components, by looking at proper embeddability of the components. As an example, it can be shown that the first graph in Figure 1 is computably categorical, whereas the second graph is not.

\begin{center}
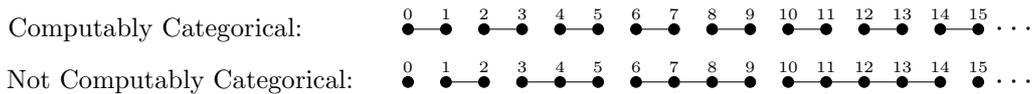

\captionof{figure}{Examples of strongly locally finite graphs.}
\medskip

\begin{tikzpicture}[inner sep=0.5mm, vertex/.style={circle,draw,fill=black}, every label/.style={black, font=\tiny}, highlight/.style={circle,draw,fill=black}]

\node[vertex](A'') at (0,0.7) [label=above: 0]{};
\node[vertex](B'') at (0.5,0.7) [label=above: 1]{};
\node[vertex](C'') at (1,0.7) [label=above: 2]{};
\node[vertex](D'') at (1.5,0.7) [label=above: 3]{};
\node[vertex](E'') at (2,0.7) [label=above: 4]{};
\node[vertex](F'') at (2.5,0.7) [label=above: 5]{};
\node[vertex](G'') at (3,0.7) [label=above: 6]{};
\node[vertex](H'') at (3.5,0.7) [label=above: 7]{};
\node[vertex](I'') at (4,0.7) [label=above: 8]{};
\node[vertex](J'') at (4.5,0.7) [label=above: 9]{};
\node[vertex](K'') at (5,0.7) [label=above: 10]{};
\node[vertex](L'') at (5.5,0.7) [label=above: 11]{};
\node[vertex](M'') at (6,0.7) [label=above: 12]{};
\node[vertex](N'') at (6.5,0.7) [label=above: 13]{};
\node[vertex](O'') at (7,0.7) [label=above: 14]{};
\node[vertex](P'') at (7.5,0.7) [label=above: 15]{};
\node at (8,0.7) {$\cdots$};
\node at (-3.33,0.7) {\footnotesize{Computably Categorical:}};
 
\draw[-] (A'') to (B'');
\draw[-] (C'') to (D'');
\draw[-] (E'') to (F'');
\draw[-] (G'') to (H'');
\draw[-] (I'') to (J'');
\draw[-] (K'') to (L'');
\draw[-] (M'') to (N'');
\draw[-] (O'') to (P'');

\node[highlight](A') at (0,0) [label=above: 0]{};
\node[highlight](B') at (0.5,0) [label=above: 1]{};
\node[highlight](C') at (1,0) [label=above: 2]{};
\node[highlight](D') at (1.5,0) [label=above: 3]{};
\node[highlight](E') at (2,0) [label=above: 4]{};
\node[highlight](F') at (2.5,0) [label=above: 5]{};
\node[highlight](G') at (3,0) [label=above: 6]{};
\node[highlight](H') at (3.5,0) [label=above: 7]{};
\node[highlight](I') at (4,0) [label=above: 8]{};
\node[highlight](J') at (4.5,0) [label=above: 9]{};
\node[highlight](K') at (5,0) [label=above: 10]{};
\node[highlight](L') at (5.5,0) [label=above: 11]{};
\node[highlight](M') at (6,0) [label=above: 12]{};
\node[highlight](N') at (6.5,0) [label=above: 13]{};
\node[highlight](O') at (7,0) [label=above: 14]{};
\node[highlight](P') at (7.5,0) [label=above: 15]{};
\node at (8,0) {$\cdots$};
\node at (-3,0) {\footnotesize{Not Computably Categorical:}};
 
\draw[-] (B') to (C');
\draw[-] (D') to (E');
\draw[-] (E') to (F');
\draw[-] (G') to (H');
\draw[-] (H') to (I');
\draw[-] (I') to (J');
\draw[-] (K') to (L');
\draw[-] (L') to (M');
\draw[-] (M') to (N');
\draw[-] (N') to (O');

\end{tikzpicture}
\end{center}
\medskip

We are interested in classes of infinite directed graphs that are derived from computable functions. Cenzer, Harizanov, and Remmel first studied directed graphs of this type in \cite{Injections}, where they defined injection structures. An \emph{injection structure} $\mathcal{A}=(A,f)$ is a countable set $A$ together with an injective function $f: A \to A$. An injection structure can be completely classified up to isomorphism by the number, type, and size of its \emph{orbits}, which are the types of connected components the structure may have. 

\begin{center}
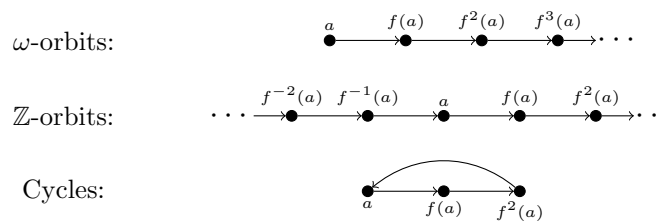

\captionof{figure}{Types of orbits in an injection structure.}
\begin{tikzpicture}[inner sep=0.5mm, vertex/.style={circle,draw,fill=black}, every label/.style={black, font=\tiny}, highlight/.style={circle,draw,fill=black}]

\node[vertex](A) at (-1.5,1) [label=above: $a$]{};
\node[vertex](B) at (-0.5,1) [label=above: $f(a)$]{};
\node[vertex](C) at (0.5,1) [label=above: $f^2(a)$]{};
\node[vertex](D) at (1.5,1) [label=above: $f^3(a)$]{};
\node at (2.3,1) {$\cdots$};
\node at (-5,1) {\footnotesize{$\omega$-orbits:}};

\draw[->] (A) to (B);
\draw[->] (B) to (C);
\draw[->] (C) to (D);
\draw[->] (D) to (2,1);

\node[vertex](A') at (-2,0) [label=above: $f^{-2}(a)$]{};
\node[vertex](B') at (-1,0) [label=above: $f^{-1}(a)$]{};
\node[vertex](C') at (0,0) [label=above: $a$]{};
\node[vertex](D') at (1,0) [label=above: $f(a)$]{};
\node[vertex](E') at (2,0) [label=above: $f^2(a)$]{};
\node at (2.8,0) {$\cdots$};
\node at (-2.8,0) {$\cdots$};
\node at (-5,0) {\footnotesize{$\mathbb{Z}$-orbits:}};

\draw[->] (A') to (B');
\draw[->] (B') to (C');
\draw[->] (C') to (D');
\draw[->] (D') to (E');
\draw[->] (E') to (2.5,0);
\draw[->] (-2.5,0) to (A');

\node[vertex](A'') at (-1,-1) [label=below: $a$]{};
\node[vertex](B'') at (0,-1) [label=below: $f(a)$]{};
\node[vertex](C'') at (1,-1) [label=below: $f^2(a)$]{};
\node at (-5,-1) {\footnotesize{Cycles:}};

\draw[->] (A'') to (B'');
\draw[->] (B'') to (C'');
\draw[->] (C'') to [bend right=40] (A'');

\end{tikzpicture}
\end{center}

Cenzer, Harizanov, and Remmel obtained the following characterization theorem for injection structures.

\begin{thm}
A computable injection structure $\mathcal{A}$ is computably categorical if and only if $\mathcal{A}$ has only finitely many infinite orbits, that is, only finitely many $\omega$-orbits and only finitely many $\mathbb{Z}$-orbits.
\end{thm}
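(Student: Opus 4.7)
The plan is to handle the two directions of this equivalence separately, starting with the easier implication.

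For the direction ($\Leftarrow$), I would build a computable isomorphism $h\colon\mathcal{A}\to\mathcal{B}$ between any two computable copies via a back-and-forth construction that uses a finite amount of hardcoded non-computable data. Assuming $\mathcal{A}$ has $k_\omega$ many $\omega$-orbits and $k_\mathbb{Z}$ many $\mathbb{Z}$-orbits, I would first choose specific roots $r_1^{\mathcal{A}},\ldots,r_{k_\omega}^{\mathcal{A}}$ for the $\omega$-orbits of $\mathcal{A}$ and representatives $z_1^{\mathcal{A}},\ldots,z_{k_\mathbb{Z}}^{\mathcal{A}}$ for its $\mathbb{Z}$-orbits, together with analogous choices in $\mathcal{B}$ and a bijective pairing between the two lists; these finitely many numbers can be hardcoded into the algorithm. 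To compute $h(x)$ for $x\in\mathcal{A}$, I would run in parallel the searches for $n$ such that $f^n(x)=x$ (finite cycle case), for $(i,n)$ with $x=f^n(r_i^{\mathcal{A}})$ ($\omega$-orbit case), and for $i$ such that the forward orbit of $x$ meets the forward orbit of $z_i^{\mathcal{A}}$ ($\mathbb{Z}$-orbit case). Exactly one of these searches terminates and pins down both the orbit of $x$ and the corresponding image in $\mathcal{B}$. Finite cycles of each length $n$ would be paired off as they are enumerated in the two structures, using the isomorphism invariant that $\mathcal{A}$ and $\mathcal{B}$ have the same number of $n$-cycles.

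For the direction ($\Rightarrow$) I would argue the contrapositive. Assume without loss of generality that $\mathcal{A}$ has infinitely many $\omega$-orbits (the case of infinitely many $\mathbb{Z}$-orbits is analogous but with a different invariant). I would construct two computable copies $\mathcal{B}_1$ and $\mathcal{B}_2$ of $\mathcal{A}$ whose $\omega$-orbit root sets differ in Turing complexity. In $\mathcal{B}_1$, lay out the $n$-th $\omega$-orbit on the positions $\{\langle n,k\rangle : k\in\omega\}$ with $\langle n,0\rangle$ as the root, so that the root set is the computable set $\{\langle n,0\rangle : n\in\omega\}$. For $\mathcal{B}_2$, designate a candidate root for each $\omega$-orbit and dynamically attach a preimage chain at stage $s$ precisely when a given noncomputable c.e.\ set (say $K$) enumerates an associated index $e$; the resulting root set of $\mathcal{B}_2$ then codes $\overline{K}$ and is noncomputable. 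Since any computable isomorphism $h\colon\mathcal{B}_2\to\mathcal{B}_1$ must send roots to roots, the preimage under $h$ of the computable root set of $\mathcal{B}_1$ would be computable, contradicting the noncomputability of the root set of $\mathcal{B}_2$.

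The hard part will be the $\mathbb{Z}$-orbit subcase of the forward direction, since $\mathbb{Z}$-orbits lack a canonical root to serve as a coding hook. My plan there is to code noncomputability into the orbit-equivalence relation itself, arranging $\mathcal{B}_2$ so that two designated elements collapse into a common $\mathbb{Z}$-orbit exactly when a given index enters $K$, while the analogous partition in $\mathcal{B}_1$ is computable by construction. The remaining work—verifying termination of the parallel search in the backward direction on every input, and checking that the bookkeeping in the $\mathcal{B}_2$ construction preserves isomorphism with $\mathcal{A}$ throughout—should be routine.
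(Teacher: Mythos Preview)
This theorem is not proved in the paper at all: it appears in the introduction as a background result attributed to Cenzer, Harizanov, and Remmel \cite{Injections}, stated without argument to motivate the study of $(2,1)\!:\!1$ structures. There is consequently nothing in the paper to compare your proposal against.

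For what it is worth, your outline is the standard argument and is essentially correct. One point to make explicit in the forward direction: when you build the ``nice'' copy $\mathcal{B}_1$, you must reproduce all of $\mathcal{A}$'s orbit structure, not only the $\omega$-orbits you describe. The finite cycles can be copied computably from $\mathcal{A}$ since cycle membership is decidable, and the number of $\mathbb{Z}$-orbits is a single isomorphism invariant you may hardcode non-uniformly; with those pieces in place $\mathcal{B}_1$ really is a computable copy of $\mathcal{A}$ with computable root set, and your invariant argument goes through. You should also specify that the preimage chain attached in $\mathcal{B}_2$ is finite (so the orbit remains an $\omega$-orbit rather than becoming a $\mathbb{Z}$-orbit); then the designated elements are roots exactly on $\overline{K}$, as you want. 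The $\mathbb{Z}$-orbit subcase works the same way, with the analogous non-uniform bookkeeping for the number of $\omega$-orbits.
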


Next, Cenzer, Harizanov, and Remmel \cite{Two-to-One} looked at \emph{two-to-one} (2:1) \emph{structures} $\mathcal{A}=(A,f)$, where $|f^{-1}(a)|=2$ for all $a \in A$, as well as (2,0):1 \emph{structures} $\mathcal{A}=(A,f)$, where $|f^{-1}(a)|\in \{0,2\}$ for all $a \in A$. Here, $|X|$ denotes the cardinality of the set $X$. Thus, in a 2:1 structure, every element has exactly two pre-images under $f$, and in a (2,0):1 structure, every element has either exactly two pre-images or no pre-image under $f$. The types of orbits for these structures are shown below.

\begin{center}
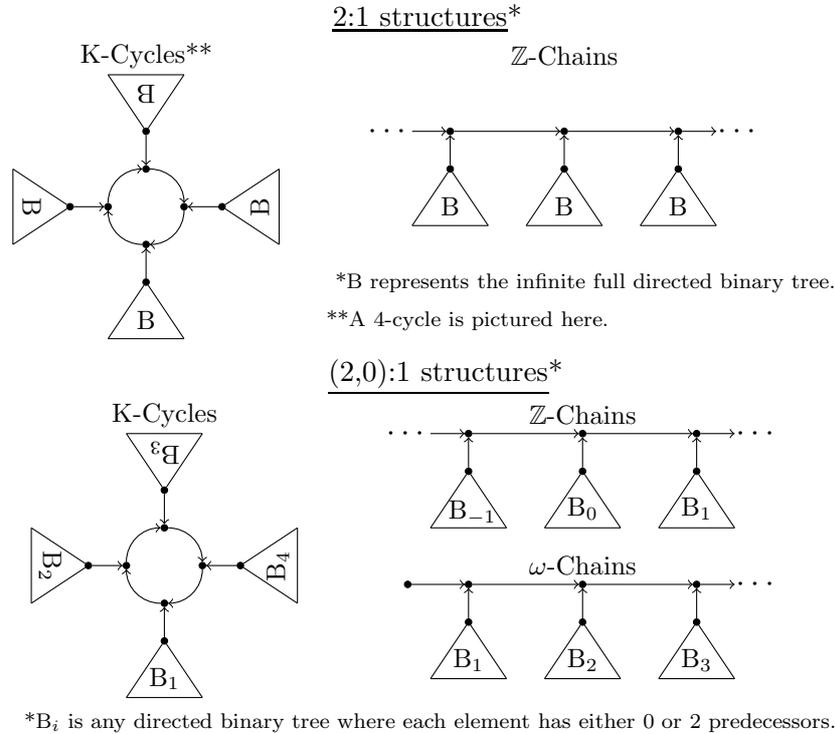

\captionof{figure}{Types of orbits in 2:1 structures and (2,0):1 structures.}
\begin{tikzpicture}[inner sep=0.3mm, vertex/.style={circle,draw,fill=black}, every label/.style={black, font=\scriptsize}, highlight/.style={circle,draw,fill=black}]

\node at (3.7,2.5) {\small \underline{2:1 structures}*};
\node[vertex](A) at (0,0.5) {};
\node[vertex](B) at (0.5,0){};
\node[vertex](C) at (0,-0.5) {};
\node[vertex](D) at (-0.5,0) {};
\node [vertex](A1) at (0,1){};
\node[vertex](B1) at (1,0){};
\node[vertex](C1) at (0, -1){};
\node[vertex](D1) at (-1,0){};
\node at (0, 1.5){\footnotesize{\rotatebox[origin=c]{180}{B}}};
\node at (1.5, 0){\footnotesize{\rotatebox[origin=c]{90}{B}}};
\node at (0, -1.5){\footnotesize{B}};
\node at (-1.5, 0){\footnotesize{\rotatebox[origin=c]{-90}{B}}};
\node at (0,2) {\footnotesize{K-Cycles**}};

\draw [->] (A) to [bend left=45] (B);
\draw [->] (B) to [bend left=45] (C);
\draw [->] (C) to [bend left=45] (D);
\draw [->] (D) to [bend left=45] (A);

\draw [->] (A1) to (A);
\draw [-] (-0.5, 1.75) to (A1);
\draw [-] (0.5,1.75) to (A1);
\draw [-] (-0.5,1.75) to (0.5,1.75);

\draw[->] (B1) to (B);
\draw[-] (1.75, -0.5) to (B1);
\draw[-] (1.75, 0.5) to (B1);
\draw[-] (1.75, 0.5) to (1.75, -0.5);

\draw[->] (C1) to (C);
\draw[-] (-0.5,-1.75) to (C1);
\draw[-] (0.5,-1.75) to (C1);
\draw[-] (-0.5,-1.75) to (0.5,-1.75);

\draw[->] (D1) to (D);
\draw[-] (-1.75, -0.5) to (D1);
\draw[-] (-1.75,0.5) to (D1);
\draw[-] (-1.75,-0.5) to (-1.75,0.5);

\node[vertex](E) at (4,1){};
\node[vertex](F) at (5.5,1){};
\node[vertex](G) at (7,1){};
\node at (7.8,1){$\cdots$};
\node at (3.2,1) {$\cdots$};
\node[vertex](E1) at (4,0.5){};
\node[vertex](F1) at (5.5,0.5){};
\node[vertex](G1) at (7,0.5){};
\node at (4, 0){\footnotesize{B}};
\node at (5.5, 0){\footnotesize{B}};
\node at (7, 0){\footnotesize{B}};
\node at (5.5,2) {\footnotesize{$\mathbb{Z}$-Chains}};

\draw[->] (E) to (F);
\draw[->] (F) to (G);
\draw[->] (E1) to (E);
\draw[->] (F1) to (F);
\draw[->] (G1) to (G);
\draw[->] (G) to (7.5,1);
\draw[->] (3.5,1) to (E);
\draw[-] (3.5,-.25) to (E1);
\draw[-] (4.5,-.25) to (E1);
\draw[-] (3.5,-.25) to (4.5,-.25);

\draw[-] (5,-.25) to (F1);
\draw[-] (6,-.25) to (F1);
\draw[-] (5,-.25) to (6,-.25);

\draw[-] (6.5,-.25) to (G1);
\draw[-] (7.5,-.25) to (G1);
\draw[-] (6.5,-.25) to (7.5,-.25);

\node at (5.78,-1) {\scriptsize{*B represents the infinite full directed binary tree.}};
\node at (4.23, -1.5) {\scriptsize{**A 4-cycle is pictured here.}};

\end{tikzpicture}
\medskip

\begin{tikzpicture}[inner sep=0.3mm, vertex/.style={circle,draw,fill=black}, every label/.style={black, font=\scriptsize}, highlight/.style={circle,draw,fill=black}]

\node at (3.7,2.5) {\small \underline{(2,0):1 structures}*};
\node[vertex](A) at (0,0.5) {};
\node[vertex](B) at (0.5,0){};
\node[vertex](C) at (0,-0.5) {};
\node[vertex](D) at (-0.5,0) {};
\node [vertex](A1) at (0,1){};
\node[vertex](B1) at (1,0){};
\node[vertex](C1) at (0, -1){};
\node[vertex](D1) at (-1,0){};
\node at (0, 1.5){\footnotesize{\rotatebox[origin=c]{180}{B$_3$}}};
\node at (1.5, 0){\footnotesize{\rotatebox[origin=c]{90}{B$_4$}}};
\node at (0, -1.5){\footnotesize{B$_1$}};
\node at (-1.5, 0){\footnotesize{\rotatebox[origin=c]{-90}{B$_2$}}};
\node at (0,2) {\footnotesize{K-Cycles}};

\draw [->] (A) to [bend left=45] (B);
\draw [->] (B) to [bend left=45] (C);
\draw [->] (C) to [bend left=45] (D);
\draw [->] (D) to [bend left=45] (A);

\draw [->] (A1) to (A);
\draw [-] (-0.5, 1.75) to (A1);
\draw [-] (0.5,1.75) to (A1);
\draw [-] (-0.5,1.75) to (0.5,1.75);

\draw[->] (B1) to (B);
\draw[-] (1.75, -0.5) to (B1);
\draw[-] (1.75, 0.5) to (B1);
\draw[-] (1.75, 0.5) to (1.75, -0.5);

\draw[->] (C1) to (C);
\draw[-] (-0.5,-1.75) to (C1);
\draw[-] (0.5,-1.75) to (C1);
\draw[-] (-0.5,-1.75) to (0.5,-1.75);

\draw[->] (D1) to (D);
\draw[-] (-1.75, -0.5) to (D1);
\draw[-] (-1.75,0.5) to (D1);
\draw[-] (-1.75,-0.5) to (-1.75,0.5);

\node[vertex](E) at (4,1.75){};
\node[vertex](F) at (5.5,1.75){};
\node[vertex](G) at (7,1.75){};
\node at (7.8,1.75){$\cdots$};
\node at (3.2,1.75) {$\cdots$};
\node[vertex](E1) at (4,1.25){};
\node[vertex](F1) at (5.5,1.25){};
\node[vertex](G1) at (7,1.25){};
\node at (4.05, 0.70){\footnotesize{B$_{-1}$}};
\node at (5.5, 0.75){\footnotesize{B$_0$}};
\node at (7, 0.75){\footnotesize{B$_1$}};
\node at (5.5,2) {\footnotesize{$\mathbb{Z}$-Chains}};

\draw[->] (E) to (F);
\draw[->] (F) to (G);
\draw[->] (E1) to (E);
\draw[->] (F1) to (F);
\draw[->] (G1) to (G);
\draw[->] (G) to (7.5,1.75);
\draw[->] (3.5,1.75) to (E);
\draw[-] (3.5,0.5) to (E1);
\draw[-] (4.5,0.5) to (E1);
\draw[-] (3.5,0.5) to (4.5,0.5);

\draw[-] (5,0.5) to (F1);
\draw[-] (6,0.5) to (F1);
\draw[-] (5,0.5) to (6,0.5);

\draw[-] (6.5,0.5) to (G1);
\draw[-] (7.5,0.5) to (G1);
\draw[-] (6.5,0.5) to (7.5,0.5);

\node[vertex](H) at (4,-0.25){};
\node[vertex](I) at (5.5,-0.25){};
\node[vertex](J) at (7,-0.25){};
\node at (7.8,-0.25){$\cdots$};
\node[vertex] at (3.2,-0.25) {};
\node[vertex](H1) at (4,-0.75){};
\node[vertex](I1) at (5.5,-0.75){};
\node[vertex](J1) at (7,-0.75){};
\node at (4, -1.25){\footnotesize{B$_1$}};
\node at (5.5, -1.25){\footnotesize{B$_2$}};
\node at (7, -1.25){\footnotesize{B$_3$}};
\node at (5.5,0){\footnotesize{$\omega$-Chains}};

\draw[->] (H) to (I);
\draw[->] (I) to (J);
\draw[->] (H1) to (H);
\draw[->] (I1) to (I);
\draw[->] (J1) to (J);
\draw[->] (J) to (7.5,-0.25);
\draw[->] (3.2,-0.25) to (H);

\draw[-] (3.5,-1.5) to (H1);
\draw[-] (4.5,-1.5) to (H1);
\draw[-] (3.5,-1.5) to (4.5,-1.5);

\draw[-] (5,-1.5) to (I1);
\draw[-] (6,-1.5) to (I1);
\draw[-] (5,-1.5) to (6,-1.5);

\draw[-] (6.5,-1.5) to (J1);
\draw[-] (7.5,-1.5) to (J1);
\draw[-] (6.5,-1.5) to (7.5,-1.5);

\node at (3.5, -2.08){\scriptsize{*B$_i$ is any directed binary tree where each element has either 0 or 2 predecessors.}};

\end{tikzpicture}
\end{center}

Cenzer, Harizanov, and Remmel investigated computably categorical (2,0):1 structures by considering additional structural and algorithmic properties. They also characterized the computably categorical 2:1 structures by proving the following theorem.\\

\begin{thm}
A computable 2:1 structure $\mathcal{A}$ is computably categorical if and only if $\mathcal{A}$ has only finitely many $\mathbb{Z}$-chains.
\end{thm}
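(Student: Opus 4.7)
The plan is to split the proof into the two directions of the equivalence, reducing everything to the orbit decomposition of a 2:1 structure into K-cycles and $\mathbb{Z}$-chains.

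For the sufficiency direction ($\Leftarrow$), suppose $\mathcal{A}$ has only finitely many $\mathbb{Z}$-chains, and let $\mathcal{A}_1,\mathcal{A}_2$ be two computable copies. I would use a finite non-uniform advice string consisting of one spine representative $a_i^{(j)}$ from each $\mathbb{Z}$-chain orbit of $\mathcal{A}_1$ and $\mathcal{A}_2$ (finitely many in total). With this advice in hand, orbit membership becomes decidable: the element $x$ lies in the orbit of $a_i^{(j)}$ iff $\exists k,m\geq 0.\ f^k(x)=f^m(a_i^{(j)})$, and $x$ lies on a K-cycle iff $\exists k\geq 1.\ f^k(x)=x$; both conditions are $\Sigma_1$, mutually exclusive, and together exhaustive, so each is in fact computable. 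I would then build the isomorphism in three blocks: (i) match the $\mathbb{Z}$-chain orbits in pairs using the advice; (ii) enumerate K-cycles of each length $k$ in the two copies in parallel and pair them off in order of appearance; (iii) within each matched pair of orbits, pick an alignment of the spine or cycle and extend it to the attached B-trees using the fact that the full infinite directed binary tree is computably categorical (its levels are decidable once the root is known, giving a level-by-level isomorphism).

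For the necessity direction ($\Rightarrow$), I would argue by contraposition using a finite-injury priority construction. Assuming $\mathcal{A}$ has infinitely many $\mathbb{Z}$-chains, I would build two computable copies $\mathcal{A},\mathcal{B}$ satisfying requirements $R_e:\ \varphi_e$ is not an isomorphism $\mathcal{A}\to\mathcal{B}$, with each $R_e$ reserving a fresh pair of $\mathbb{Z}$-chains (one in each copy) as its private diagonalization arena; since different requirements operate on disjoint chains, they cannot injure each other. The diagonalization within a single reserved pair mirrors the corresponding step in the injection-structure argument behind Theorem 1.2: exploit the $\mathbb{Z}$-many shift automorphisms of a $\mathbb{Z}$-chain by enumerating $\mathcal{A}$'s chain symmetrically about a fixed base point and delaying commitment to the spine alignment in $\mathcal{B}$; once $\varphi_{e,s}$ has committed values on enough spine elements of $\mathcal{A}$, extend $\mathcal{B}$'s chain so that the shift $\varphi_e$ would force is inconsistent with a subsequently enumerated preimage, driving $\varphi_e$ either to diverge or to violate the $f$-preservation property of an isomorphism. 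The attached B-trees play no role in the diagonalization and can be enumerated uniformly in the background.

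The main obstacle in sufficiency is that, inside a single $\mathbb{Z}$-chain orbit, singling out the actual spine from among the many bi-infinite $f$-paths appears to require non-computable information; I would sidestep this by using a computable \emph{pseudo-spine} (a bi-infinite $f$-path obtained by choosing preimages canonically, e.g., the numerically smaller one at each step), because the subtree hanging off each ``other preimage'' along such a path is still isomorphic to the full infinite directed binary tree---a level-counting argument shows that the $n$-th backward level of this subtree contains exactly $2^n$ nodes---so the orbit isomorphism can be completed tree by tree. The main obstacle in necessity is the routine priority bookkeeping, which is kept tame by the disjoint-chain reservation scheme.
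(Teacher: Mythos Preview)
The paper does not prove this theorem. Theorem~1.3 is quoted in the introduction as a result of Cenzer, Harizanov, and Remmel \cite{Two-to-One} and serves purely as background motivation for the paper's own work on $(2,1){:}1$ structures; no argument for it appears anywhere in the paper. There is consequently nothing here to compare your proposal against.

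On its own merits, your outline is the standard shape for such a result and is broadly sound. In the sufficiency direction, the crucial structural observation you isolate---that in a genuine $2{:}1$ structure every attached tree is the full infinite binary tree, so \emph{any} bi-infinite $f$-path through a $\mathbb{Z}$-chain yields an isomorphic decomposition---is exactly what lets you avoid having to identify a canonical spine; your level-counting justification for this is correct, as is the $\Sigma_1$-partition argument for deciding orbit membership from finite advice. In the necessity direction your plan is in the right family, but two remarks are in order. First, since distinct requirements act on disjoint reserved chains and all $\mathbb{Z}$-chains in a $2{:}1$ structure are pairwise isomorphic, the construction you describe is really injury-free rather than finite-injury; the label is harmless but inaccurate. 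Second, and more substantively, the diagonalization step is where the real content lies, and your description (``delay commitment to the spine alignment \ldots\ force an inconsistent shift'') is too vague to count as a proof: because a $2{:}1$ $\mathbb{Z}$-chain is so highly homogeneous, you need to say precisely which finite configuration of $\varphi_{e,s}$ you wait for and how you then extend $\mathcal{B}$ so that no extension of $\varphi_e$ can respect $g$. The usual maneuver is to keep two nascent finite chains in $\mathcal{B}$ uncommitted and, once $\varphi_e$ has placed two $\mathcal{A}$-elements from distinct $\mathbb{Z}$-chains, merge their images into a single chain of $\mathcal{B}$ (or vice versa); spelling this out would close the gap.
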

\medskip

In this paper, we define a (2,1):1 structure, which is a natural extension of the structures introduced by Cenzer, Harizanov, and Remmel. Our ultimate goal is to provide a characterization of computable categoricity for these directed graphs, as has been done for the graphs discussed above. In Section 2, we establish fundamental structural and computability-theoretic properties of (2,1):1 structures, and use these to investigate computable categoricity for such structures. In section 3, we present some examples of (2,1):1 structures with certain desired computability-theoretic properties. 

\section{Computable Categoricity of (2,1):1 Structures}

We begin this section by defining a (2,1):1 structure.

\begin{dfn}
A \emph{(2,1):1 structure} $\mathcal{A}=(A,f)$ is a set $A$ together with a function $f:A \to A$ such that $|f^{-1}(a)|\in \{1,2\}$ for all $a \in A$. That is, every element in $A$ has either exactly two pre-images or exactly one pre-image under $f$.
\end{dfn}

\noindent Naturally, we say that a (2,1):1 structure $\mathcal{A} = (A, f)$ is \emph{computable} if $A$ is a computable set and $f$ is a computable function. From now on, we will assume that all of our (2,1):1 structures are computable, with $A=\omega$, unless otherwise stated.

Although we have mentioned the concept in the introduction, we must now formally define the \emph{orbit of an element} in a (2,1):1 structure.

\begin{dfn}
Let $\mathcal{A} = (A, f)$ be a (2,1):1 structure, and let $x \in A$.The \emph{orbit of $x$ in $\mathcal{A}$}, denoted by $\mathcal{O}_{\mathcal{A}} (x)$, is defined as follows:\\

\centering{$\mathcal{O}_{\mathcal{A}} (x) = \{y \in A | (\exists m,n)(f^m(x) = f^n(y))\}$}
\end{dfn}

\noindent Here, $f^m(x)$ denotes the result of iterating the function $f(x)$ \emph{m} times on $x$. If we think of (2,1):1 structures as directed graphs, we can think of orbits as the connected components of the graph.

It is not hard to see that a (2,1):1 structure can only consist of two general types of orbits. We refer to them as \textbf{K-cycles} and \textbf{$\mathbb{Z}$-chains}, following the naming conventions for the orbits of 2:1 structures used by Cenzer, Harizanov, and Remmel. We describe these orbits below.\\

\large
\noindent{\textbf{K-Cycles}}

\normalsize
A \textbf{K-cycle} is a directed cycle with $k$ elements, where every element in the cycle has a directed binary tree attached, each of which is either infinite or empty.

\begin{center}
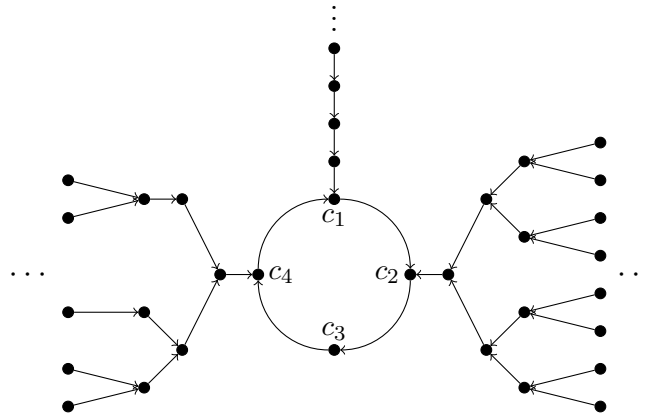

\captionof{figure}{An example of a 4-cycle.}
\begin{tikzpicture}[inner sep=0.5mm, vertex/.style={circle,draw,fill=black}, every label/.style={black, font=\small}, highlight/.style={circle,draw,fill=black}]

\node[vertex](A) at (0,1) [label=below:$c_1$]{};
\node[vertex](B)  at (1,0)  [label = left:$c_2$] {};
\node[vertex](C)   at (0,-1)  [label=above:$c_3$]{};
\node[vertex](D)   at (-1,0)  [label = right:$c_4$]{};

\node[vertex](A1) at (0,1.5){};
\node[vertex](A2) at (0,2){};
\node[vertex](A3) at (0,2.5){};
\node[vertex](A4) at (0,3){};
\node(A5) at (0,3.5){$\vdots$};

\node[vertex](B1) at (1.5,0){};
\node[vertex](B21) at (2,-1){};
\node[vertex](B22) at (2, 1){};
\node[vertex](B31) at (2.5,-1.5){};
\node[vertex](B32) at (2.5, -0.5){};
\node[vertex](B33) at (2.5, 0.5){};
\node[vertex](B34) at (2.5, 1.5){};
\node[vertex](B41) at (3.5, -1.75){};
\node[vertex](B42) at (3.5, -1.25){};
\node[vertex](B43) at (3.5, -0.75){};
\node[vertex](B44) at (3.5, -0.25){};
\node[vertex](B45) at (3.5, 0.25){};
\node[vertex](B46) at (3.5, 0.75){};
\node[vertex](B47) at (3.5, 1.25){};
\node[vertex](B48) at (3.5, 1.75){};
\node (B5) at (4,0){$\cdots$};

\node[vertex](D1) at (-1.5,0){};
\node[vertex](D21) at (-2, 1){};
\node[vertex](D22) at (-2, -1){};
\node[vertex](D31) at (-2.5, 1){};
\node[vertex](D32) at (-2.5, -0.5){};
\node[vertex](D33) at (-2.5, -1.5){};
\node[vertex](D41) at (-3.5, 1.25){};
\node[vertex](D42) at (-3.5, 0.75){};
\node[vertex](D43) at (-3.5, -0.5){};
\node[vertex](D44) at (-3.5, -1.25){};
\node[vertex](D45) at (-3.5, -1.75){};
\node(D5) at (-4,0){$\cdots$};

\draw [->] (A) to [bend left=45] (B);
\draw [->] (B) to [bend left=45] (C);
\draw [->] (C) to [bend left=45] (D);
\draw [->] (D) to [bend left=45] (A);

\draw [->] (A1) to (A);
\draw [->] (A2) to (A1);
\draw [->] (A3) to (A2);
\draw [->] (A4) to (A3);

\draw[->] (B1) to (B);
\draw[->] (B21) to (B1);
\draw[->] (B22) to (B1);
\draw[->] (B31) to (B21);
\draw[->] (B32) to (B21);
\draw[->] (B33) to (B22);
\draw[->] (B34) to (B22);
\draw[->] (B41) to (B31);
\draw[->] (B42) to (B31);
\draw[->] (B43) to (B32);
\draw[->] (B44) to (B32);
\draw[->] (B45) to (B33);
\draw[->] (B46) to (B33);
\draw[->] (B47) to (B34);
\draw[->] (B48) to (B34);

\draw[->] (D1) to (D);
\draw[->] (D21) to (D1);
\draw[->] (D22) to (D1);
\draw[->] (D31) to (D21);
\draw[->] (D32) to (D22);
\draw[->] (D33) to (D22);
\draw[->] (D41) to (D31);
\draw[->] (D42) to (D31);
\draw[->] (D43) to (D32);
\draw[->] (D44) to (D33);
\draw[->] (D45) to (D33);

\end{tikzpicture}
\end{center}

An element $x$ of a K-cycle is called a \textbf{cyclic element} if there exists an $n>0$ such that $f^n(x)=x$. We denote the cyclic elements of a K-cycle by $c_1$, $c_2$,...,$c_K$, where $c_i \not= c_j$ for $1 \leq i < j \leq K$, and $f(c_r)=c_{r+1}$ for $1 \leq r < K$, and $f(c_K)=c_1$. Since each K-cycle consists of only one directed cycle, we can uniquely specify a particular K-cycle within a (2,1):1 structure by listing its K cyclic elements.

In Figure 4, each of the cyclic elements $c_1$, $c_2$, $c_3$, $c_4$ has a different type of binary tree attached. The tree attached to $c_1$ is often referred to as a \textbf{degenerate tree}, where every element in the tree has exactly one pre-image. The tree attached to $c_2$ is a \textbf{full binary tree}, as every element has exactly two pre-images. The tree attached to $c_3$ is the \textbf{empty tree}, and the tree attached to $c_4$ is an arbitrary infinite binary tree that is neither empty, degenerate, nor full.\\

\large
\noindent{\textbf{$\mathbb{Z}$-Chains}}

\normalsize
A \textbf{$\mathbb{Z}$-chain} consists of a $\mathbb{Z}$-orbit of elements, where every element in the orbit has a directed binary tree attached, each of which is either infinite or empty. 

\begin{center}
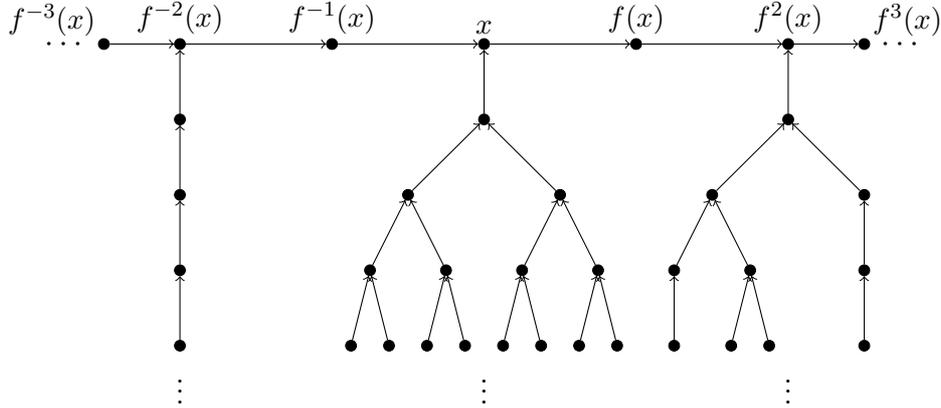

\captionof{figure}{An example of a $\mathbb{Z}$-chain.}
\begin{tikzpicture}[inner sep=0.5mm, vertex/.style={circle,draw,fill=black}, every label/.style={black, font=\small}]

\node[vertex](A) at (1,0)[label=above left:$f^{-3}(x)$]{};
\node[vertex](B) at (2,0)[label=above:$f^{-2}(x)$]{};
\node[vertex](C) at (4,0)[label=above:$f^{-1}(x)$]{};
\node[vertex](D) at (6,0)[label=above:$x$]{};
\node[vertex](E) at (8,0)[label=above:$f(x)$]{};
\node[vertex](F) at (10,0)[label=above:$f^2(x)$]{};
\node[vertex](G) at (11,0)[label=above right:$f^3(x)$]{};
\node(X) at (0.5,0){$\cdots$};
\node(Y) at (11.5,0){$\cdots$};

\node[vertex](B1) at (2,-1){};
\node[vertex](B2) at (2,-2){};
\node[vertex](B3) at (2,-3){};
\node[vertex](B4) at (2,-4){};
\node(B5) at (2,-4.5){$\vdots$};

\node[vertex](D1) at (6,-1){};
\node[vertex](D21) at (5, -2){};
\node[vertex](D22) at (7, -2){};
\node[vertex](D31) at (4.5, -3){};
\node[vertex](D32) at (5.5, -3){};
\node[vertex](D33) at (6.5, -3){};
\node[vertex](D34) at (7.5, -3){};
\node[vertex](D41) at (4.25,-4){};
\node[vertex](D42) at (4.75,-4){};
\node[vertex](D43) at (5.25,-4){};
\node[vertex](D44) at (5.75,-4){};
\node[vertex](D45) at (6.25,-4){};
\node[vertex](D46) at (6.75,-4){};
\node[vertex](D47) at (7.25,-4){};
\node[vertex](D48) at (7.75,-4){};
\node(D5) at (6, -4.5){$\vdots$};

\node[vertex](F1) at (10,-1){};
\node[vertex](F21) at (9,-2){};
\node[vertex](F22) at (11,-2){};
\node[vertex](F31) at (8.5, -3){};
\node[vertex](F32) at (9.5,-3){};
\node[vertex](F33) at (11,-3){};
\node[vertex](F41) at (8.5,-4){};
\node[vertex](F42) at (9.25,-4){};
\node[vertex](F43) at (9.75,-4){};
\node[vertex](F44) at (11,-4){};
\node(F5) at (10,-4.5){$\vdots$};

\draw[->] (A) to (B);
\draw[->] (B) to (C);
\draw[->] (C) to (D);
\draw[->] (D) to (E);
\draw[->] (E) to (F);
\draw[->] (F) to (G);

\draw[->] (B1) to (B);
\draw[->] (B2) to (B1);
\draw[->] (B3) to (B2);
\draw[->] (B4) to (B3);

\draw[->] (D1) to (D);
\draw[->] (D21) to (D1);
\draw[->] (D22) to (D1);
\draw[->] (D31) to (D21);
\draw[->] (D32) to (D21);
\draw[->] (D33) to (D22);
\draw[->] (D34) to (D22);
\draw[->] (D41) to (D31);
\draw[->] (D42) to (D31);
\draw[->] (D43) to (D32);
\draw[->] (D44) to (D32);
\draw[->] (D45) to (D33);
\draw[->] (D46) to (D33);
\draw[->] (D47) to (D34);
\draw[->] (D48) to (D34);

\draw[->] (F1) to (F);
\draw[->] (F21) to (F1);
\draw[->] (F22) to (F1);
\draw[->] (F31) to (F21);
\draw[->] (F32) to (F21);
\draw[->] (F33) to (F22);
\draw[->] (F41) to (F31);
\draw[->] (F42) to (F32);
\draw[->] (F43) to (F32);
\draw[->] (F44) to (F33);

\end{tikzpicture}
\end{center}

Here, a $\mathbb{Z}$-orbit refers to an infinite set of elements $\{..., f^{-2}(x), f^{-1}(x), x, f(x), f^2(x),...\}$ such that for all $m,n \in \mathbb{Z}$ with $m \not= n$, $f^m(x) \not= f^n(x)$. Unlike with cyclic elements in a K-cycle, a $\mathbb{Z}$-orbit within a $\mathbb{Z}$-chain does not necessarily \emph{uniquely} determine the $\mathbb{Z}$-chain, since a $\mathbb{Z}$-chain may contain more than one different $\mathbb{Z}$-orbit. Indeed, if a $\mathbb{Z}$-chain contains any element with two pre-images, then that $\mathbb{Z}$-chain will contain more than one distinct $\mathbb{Z}$-orbit. However, given a $\mathbb{Z}$-chain, we can establish a \textbf{canonical $\mathbb{Z}$-orbit} $\{..., f^{-2}(x), f^{-1}(x), x, f(x), f^2(x),...\}$, where $x$ is the least element in the $\mathbb{Z}$-chain (under the usual ordering on $\mathbb{N}$), and $f^{-(n+1)}(x)$ is the least pre-image of $f^{-n}(x)$ for all $n \geq 0$. Thus, in Figure 5, if we take the labeled elements to be the canonical $\mathbb{Z}$-orbit of the $\mathbb{Z}$-chain, then $f^{-1}(x)$ is the least pre-image of $x$, $f^{-3}(x)$ is the least pre-image of $f^{-2}(x)$, and so on.

As we can see from Figures 4 and 5, the orbits of a (2,1):1 structure are essentially directed graphs. However, this is not quite correct, as the orbit of an element $x$ is only defined to be the \emph{set} of elements in the same connected component as $x$, and does not include any additional structure specifying an edge relation. It is advantageous to be able to refer to a connected component in a (2,1):1 structure as a directed graph instead of just a set of vertices. So we formalize this notion in the following definition.

\begin{dfn}
Let $\mathcal{A} = (A,f)$ be a (2,1):1 structure, and let $x \in A$. The \emph{connected component of $x$ in $\mathcal{A}$}, denoted by $C_{\mathcal{A}}(x)$, is the directed graph associated with $\mathcal{O}_{\mathcal{A}} (x)$. That is:

\begin{center}
$C_{\mathcal{A}}(x) = (V,E)$
\end{center}

where $V = \mathcal{O}_{\mathcal{A}} (x)$ and $E = \{(x, f(x)): x \in V\}$.
\end{dfn}

To further analyze our structures as graphs, we explore another fundamental property: the \emph{tree of an element}. 

\begin{dfn}
Let $\mathcal{A} = (A,f)$ be a (2,1):1 structure, and let $x \in A$. The \emph{tree of $x$ in $\mathcal{A}$}, denoted by $tree_{\mathcal{A}}(x)$, is defined as:

\begin{center}
$tree_{\mathcal{A}}(x) = \{a \in A: (\exists n)(f^n(a)=x)\}$
\end{center}

Furthermore, the \emph{Tree of $x$ in $\mathcal{A}$}, denoted by $Tree_{\mathcal{A}}(x)$, is the directed graph associated with $tree_{\mathcal{A}}(x)$. That is:

\begin{center}
$Tree_{\mathcal{A}}(x) = (V,E)$
\end{center}

where $V=tree_{\mathcal{A}}(x)$ and $E=\{(x,f(x)): x \in V \wedge f(x) \in V\}$.
\end{dfn}

Intuitively, we can think of $tree_{\mathcal{A}}(x)$ as the set of all \emph{predecessors} of $x$ (or the set of all elements that will eventually \emph{lead} to $x$), and we can think of $Tree_{\mathcal{A}}(x)$ as a rooted binary tree with $x$ as its root. It is apparent that if $c_i$ is a cyclic element, then $tree_{\mathcal{A}}(c_i) = \mathcal{O}_{\mathcal{A}} (c_i)$, which is the entire K-cycle containing $c_i$. However, we often wish to refer to those elements in a K-cycle that are connected to a cyclic element via a directed path that does not contain other cyclic elements. So we introduce the notion of an \emph{exclusive tree}.

\begin{dfn}
Let $\mathcal{A} = (A,f)$ be a (2,1):1 structure, and let $c_i$ be a cyclic element on a K-cycle in $A$. The \emph{exclusive tree of $c_i$ in $\mathcal{A}$}, denoted by $extree_{\mathcal{A}}(c_i)$, is the following set:
\begin{center}
$extree_{\mathcal{A}}(c_i) = \{a \in A: (\exists n)[f^n(a)=c_i \wedge (\forall m<n)(f^{m+K}(a) \not= f^m(a))]\}$
\end{center}
The \emph{exclusive Tree of $c_i$ in $\mathcal{A}$}, denoted by $exTree_{\mathcal{A}}(c_i)$ is the directed graph associated with $extree_{\mathcal{A}}(c_i)$.
\end{dfn}
\bigskip

The following properties of (2,1):1 structures will also be useful later.\\

\begin{dfn}
Let $\mathcal{A} = (A,f)$ be a (2,1):1 structure, $x \in A$, and $n \in \omega$.
\begin{itemize}
\item[(a)] The \emph{nth level of the tree of $x$}, denoted by $tree_{\mathcal{A}}(x|n)$, is defined as:
\begin{center}
$tree_{\mathcal{A}}(x|n) = \{a \in A: f^n(a)=x\}$
\end{center}
Similarly, the \emph{nth level of the exclusive tree of $c_i$} is defined as:
\begin{center}
$extree_{\mathcal{A}}(c_i|n) = \{a \in A: a \in extree_{\mathcal{A}}(c_i) \wedge f^n(a)=c_i\}$
\end{center}
\item[(b)] The \emph{tree of $x$ truncated at level n}, denoted by $tree_{\mathcal{A}}(x,n)$, is defined as:
\begin{center}
$tree_{\mathcal{A}}(x,n) = \{a \in A: (\exists m \leq n)(f^m(a)=x)\}$
\end{center}
Similarly, the \emph{exclusive tree of $c_i$ truncated at level n} is defined as:
\begin{center}
$extree_{\mathcal{A}}(c_i,n) = \{a \in A: a \in extree_{\mathcal{A}}(c_i) \wedge (\exists m \leq n)(f^m(a)=x)\}$
\end{center}
\end{itemize}
\end{dfn}

\noindent Naturally, $Tree_{\mathcal{A}}(x,n)$ and $exTree_{\mathcal{A}}(c_i,n)$ are the associated directed graphs for the sets described in Definition 2.6(b). 

Finally, we introduce two special functions for (2,1):1 structures, the \emph{branching function} and the \emph{branch isomorphism function}, which will allow us to further study the computable categoricity of our graphs.

\begin{dfn}
Let $\mathcal{A} = (A,f)$ be a \emph{(2,1):1} structure. The \emph{branching function of $\mathcal{A}$}, denoted by $\beta_{\mathcal{A}}: \mathbb{N} \to \{1,2\}$, is defined as:

$$\beta_{\mathcal{A}}(x)=
		\begin{cases}
			1 &\text{if } (\forall x_1, x_2)(f(x_1)=f(x_2)=x \implies x_1=x_2),\\
			2 &\text{if } (\exists x_1, x_2)(f(x_1)=f(x_2)=x \wedge x_1 \not= x_2).
		\end{cases}$$
The \emph{hair set of $\mathcal{A}$}, denoted by $I_\mathcal{A}$, is defined as:

\begin{center}
{$I_\mathcal{A} = \{x \in A: \beta_{\mathcal{A}}(x) = 1\}$}
\end{center}
The \emph{split hair set of $\mathcal{A}$}, denoted by $\Lambda_\mathcal{A}$, is defined as:

\begin{center}
$\Lambda_\mathcal{A} = \{x \in A: \beta_{\mathcal{A}}(x) = 2\}$
\end{center}
\end{dfn}

\begin{dfn}
Let $\mathcal{A} = (A,f)$ be a (2,1):1 structure, and let $x \in \Lambda_{\mathcal{A}}$ have distinct pre-images $x_1$ and $x_2$. The \emph{branch isomorphism function of $\mathcal{A}$}, denoted by $iso_{\mathcal{A}}: \Lambda_{\mathcal{A}} \to \{0,1\}$, is defined as:

$$iso_{\mathcal{A}}(x)=
		\begin{cases}
			0 &\text{if } Tree_\mathcal{A} (x_1) \not\cong Tree_\mathcal{A} (x_2),\\
			1 &\text{if } Tree_\mathcal{A} (x_1) \cong Tree_\mathcal{A} (x_2).
		\end{cases}$$
\end{dfn}

Essentially, the branching function takes an element $x \in A$ as an input, and outputs the number of immediate predecessors of $x$. The branch isomorphism function takes an element $x$ with two distinct pre-images as an input, and tells us if the \emph{Trees} of those pre-images are isomorphic to each other. Note that if $c_1 \in \Lambda_{\mathcal{A}}$ is a cyclic element, then $iso{\mathcal{A}} (c_1) = 0$. This is because one predecessor of $c_1$ will be another cyclic element $c_K$ while the other predecessor will be a non-cyclic element $a$, so $Tree_\mathcal{A} (c_K)$ will be the entire K-cycle containing $c_1$, while $Tree_\mathcal{A} (a)$ will be an infinite binary tree with no cycles. Hence, $Tree_\mathcal{A} (c_K)$ is clearly not isomorphic to $Tree_\mathcal{A} (a)$.

It is also important to note that neither $\beta_{\mathcal{A}}$ nor $iso_{\mathcal{A}}$ is necessarily a computable function, even if the underlying (2,1):1 structure is computable. (In Section 3, we will construct an example of such a structure.) Also, while the domain of $\beta_{\mathcal{A}}$ is always computable, the domain of $iso_{\mathcal{A}}$ may not be computable. In fact, if $\beta_{\mathcal{A}}$ is not computable, then neither is the domain of $iso_{\mathcal{A}}$. We generally avoid this issue by assuming that the branching function is computable. However, computability of the branching function does not guarantee computability of the branch isomorphism function, as we will see in Section 3.

We will now establish our first result regarding computable isomorphisms between (2,1):1 structures.

\begin{lemma}
Let $\mathcal{A} = (A,f)$ and $\mathcal{B} = (B,g)$ be two computable isomorphic (2,1):1 structures, both with a computable branching function and a computable branch isomorphism function. If $a_0 \in A$ and $b_0 \in B$ are non-cyclic elements such that $Tree_\mathcal{A} (a_0) \cong Tree_\mathcal{B} (b_0)$, then the two Trees are computably isomorphic. Likewise, if $c_1 \in A$ and $d_1 \in B$ are cyclic elements such that $exTree_\mathcal{A} (c_1) \cong exTree_\mathcal{B} (d_1)$, then the two exclusive Trees are computably isomorphic.
\end{lemma}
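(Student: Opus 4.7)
The plan is to construct $h$ computably by a top-down recursion from the root, choosing the matching of pre-images at each step with the aid of the computable functions $\beta$ and $iso$ on both sides. First I reduce the exclusive Tree case to the Tree case. Set $h(c_1)=d_1$, which is forced because $c_1$ is the unique vertex of $extree_\mathcal{A}(c_1)$ whose $f$-image lies outside the exclusive tree, so the given isomorphism must carry $c_1$ to $d_1$. If $\beta_\mathcal{A}(c_1)=1$, the exclusive tree is $\{c_1\}$ and we are done. Otherwise find the cycle length $K$ by iterating $f$ from $c_1$ until it returns, test each pre-image $y$ of $c_1$ for $f^K(y)=y$ to identify the unique non-cyclic pre-image $a_0$, do the same on the $\mathcal{B}$-side to obtain $b_0$, and set $h(a_0)=b_0$. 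What remains is to extend to a computable isomorphism $Tree_\mathcal{A}(a_0)\cong Tree_\mathcal{B}(b_0)$, which is exactly the Tree case.

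In the Tree case, inductively suppose $h(a)=b$ has been declared and extend $h$ to the pre-images of $a$ (found computably using $\beta$ and $f$). Since $\beta$ is an isomorphism invariant, $\beta_\mathcal{A}(a)=\beta_\mathcal{B}(b)$. If this common value is $1$, match the unique pre-image of $a$ with the unique pre-image of $b$. If it is $2$ with $iso_\mathcal{A}(a)=1$, then $Tree_\mathcal{A}(a_1)\cong Tree_\mathcal{A}(a_2)$, and together with $Tree_\mathcal{A}(a)\cong Tree_\mathcal{B}(b)$ this forces all four subtrees above $a_1,a_2,b_1,b_2$ to be mutually isomorphic, so either pairing of the pre-images extends to an isomorphism; pair by order, $h(a_i)=b_i$ for $a_1<a_2$ and $b_1<b_2$.

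The delicate subcase is $\beta_\mathcal{A}(a)=2$ and $iso_\mathcal{A}(a)=0$. Here $Tree_\mathcal{A}(a_1)\not\cong Tree_\mathcal{A}(a_2)$, so among $Tree_\mathcal{B}(b_1)$ and $Tree_\mathcal{B}(b_2)$ exactly one is isomorphic to $Tree_\mathcal{A}(a_1)$. To determine which, search $n=1,2,\dots$: for each $n$, compute the finite truncations $Tree_\mathcal{A}(a_1,n)$, $Tree_\mathcal{B}(b_1,n)$, $Tree_\mathcal{B}(b_2,n)$ (feasible because $f$ and $\beta$ are computable on both sides) and decide by a finite recursion whether the $\mathcal{A}$-truncation is isomorphic to each $\mathcal{B}$-truncation as a $\beta$-labeled rooted tree. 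Stop at the first $n$ at which exactly one of the two $\mathcal{B}$-truncations fails, and match $a_1$ with the surviving $b_j$, $a_2$ with the other. The main obstacle is to argue that this search always terminates; this follows from a K\"onig's lemma / compactness argument, namely that two $\beta$-labeled infinite binary trees are isomorphic if and only if all their finite truncations are (otherwise a branch through the finitely branching tree of partial level-$n$ isomorphisms would lift to a full isomorphism). Since one of $Tree_\mathcal{B}(b_1), Tree_\mathcal{B}(b_2)$ is genuinely non-isomorphic to $Tree_\mathcal{A}(a_1)$, the non-isomorphism is eventually witnessed at some finite level, and the search halts. The resulting $h$ is computable by construction, and a routine induction shows it is a total bijection intertwining $f$ and $g$, establishing both claims of the lemma.
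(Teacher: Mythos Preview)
Your proof is correct and follows essentially the same approach as the paper's: a level-by-level top-down construction using $\beta$ to find pre-images and $iso$ to decide how to match them, with the min/max pairing when $iso=1$ and a finite search over truncations when $iso=0$, plus the same reduction of the exclusive Tree case to the Tree case via the non-cyclic pre-image of the root. The only cosmetic difference is that in the $iso=0$ subcase the paper first finds a level $n$ where $Tree_\mathcal{A}(a_1,n)\not\cong Tree_\mathcal{A}(a_2,n)$ and then compares with the $\mathcal{B}$-side, whereas you search directly for a level where exactly one $\mathcal{B}$-truncation fails to match $Tree_\mathcal{A}(a_1,n)$; both searches terminate for the same K\"onig's lemma reason, which you make explicit and the paper leaves implicit.
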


\begin{proof}
We construct a computable isomorphism $h$ from $Tree_\mathcal{A} (a_0)$ to $Tree_\mathcal{B} (b_0)$ in stages as follows.\\

\emph{Stage 0}: Define $h_0(a_0) = b_0$.

\emph{Stage s+1}: Suppose that from stage $s$ we have $h_s$, an isomorphism from $Tree_\mathcal{A} (a_0, s)$ to $Tree_\mathcal{B} (b_0, s)$. For all elements $w \in tree_\mathcal{A}(a_0,s)$, define $h_{s+1}(w)=h_s(w)$. Let $x \in tree_\mathcal{A} (a_0|s)$ and let $y = h_s(x)$. If $\beta_{\mathcal{A}}(x)=1$, find the unique pre-image of $x$ under $f$, call it $x_1$, and find the pre-image of $y$ under $g$, call it $y_1$. Then define $h_{s+1}(x_1)=y_1$. 

If $\beta_{\mathcal{A}}(x)=2$, find both pre-images of $x$ under $f$, call them $x_1$ and $x_2$, and then find both pre-images of $y$ under $g$, call them $y_1$ and $y_2$. If $iso_\mathcal{A}(x) = 1$, then define $h_{s+1}(min\{x_1,x_2\})=min\{y_1,y_2\}$ and $h_{s+1}(max\{x_1,x_2\}) = max\{y_1,y_2\}$, where $min$ and $max$ are defined under the usual ordering on $\mathbb{N}$. If $iso_\mathcal{A}(x) = 0$, then there exists a level $n$ such that $Tree_\mathcal{A}(x_1,n) \not\cong Tree_\mathcal{A}(x_2,n)$. In that case, use $\beta_{\mathcal{A}}$ to reveal the vertices and edges of $Tree_\mathcal{A}(x_1)$ and $Tree_\mathcal{A}(x_2)$ one level at a time until we find such a level $n$. Then, use $\beta_{\mathcal{B}}$ to reveal the vertices and edges of $Tree_\mathcal{B}(y_1,n)$ and $Tree_\mathcal{B}(y_2,n)$. If $Tree_\mathcal{A}(x_1,n) \cong Tree_\mathcal{B}(y_1,n)$, then define $h_{s+1}(x_1)=y_1$ and $h_{s+1}(x_2)=y_2$. Otherwise, define $h_{s+1}(x_1)=y_2$ and $h_{s+1}(x_2)=y_1$. 

Repeat the procedure above for all $x \in tree_\mathcal{A} (a_0|s)$, so $h_{s+1}$ is defined on all elements in $tree_\mathcal{A} (a_0|s+1)$. This completes the construction. Let $h = lim_s h_s$.

We must now verify that $h$ is a computable isomorphism from $Tree_\mathcal{A} (a_0)$ to $Tree_\mathcal{B} (b_0)$.\\

\textbf{Claim 1}. The function $h$ is an isomorphism from $Tree_\mathcal{A} (a_0)$ to $Tree_\mathcal{B} (b_0)$.\\

\emph{Proof of Claim 1}. Suppose $h_s$ is an isomorphism from $Tree_\mathcal{A} (a_0,s)$ to $Tree_\mathcal{B} (b_0,s)$ such that there exists an isomorphism $H$ from $Tree_\mathcal{A} (a_0)$ to $Tree_\mathcal{B} (b_0)$ with $H\restriction_{tree_\mathcal{A}(a_0,s)}=h_s$. Let $x \in tree_\mathcal{A} (a_0|s)$ and let $y = h_s(x)$. 

If $\beta_{\mathcal{A}}(x)=1$, then it must be the case that $\beta_{\mathcal{B}}(y)=1$ as well. This means that $x$ and $y$ each have a unique pre-image, $x_1$ and $y_1$ respectively, and so we can properly extend $h_s$ to $h_{s+1}$ by defining $h_{s+1}(x_1)=y_1$. Furthermore, any isomorphism $H$ from $Tree_\mathcal{A} (a_0)$ to $Tree_\mathcal{B} (b_0)$ that extends $h_s$ must map $x_1$ to $y_1$. Thus, there certainly exists an isomorphism $H$ from $Tree_\mathcal{A} (a_0)$ to $Tree_\mathcal{B} (b_0)$ that extends $h_s$ and agrees with $h_{s+1}$ on $x_1$.

If $\beta_{\mathcal{A}}(x)=2$, then again, it must be the case that $\beta_{\mathcal{B}}(y)=2$. So let $x_1$ and $x_2$ be the distinct pre-images of $x$ under $f$, and let $y_1$ and $y_2$ be the distinct pre-images of $y$ under $g$. If $iso_\mathcal{A}(x) = 1$, then it must be the case that $iso_\mathcal{B}(y) = 1$. Moreover, the \emph{Trees} of both $x_1$ and $x_2$ are isomorphic to the \emph{Trees} of both $y_1$ and $y_2$. So regardless of where $h_{s+1}$ maps $x_1$ and $x_2$, there will exist an isomorphism $H$ from $Tree_\mathcal{A} (a_0)$ to $Tree_\mathcal{B} (b_0)$ that extends $h_s$ and agrees with $h_{s+1}$ on both $x_1$ and $x_2$. If $iso_\mathcal{A}(x) = 0$, then it must be the case that $iso_\mathcal{B}(y) = 0$ as well. It follows from the construction that if $h_{s+1}$ maps $x_1$ to $y_1$, then $Tree_\mathcal{A} (x_1) \cong Tree_\mathcal{B} (y_1)$, and thus it must be that $Tree_\mathcal{A} (x_2) \cong Tree_\mathcal{B} (y_2)$. The reverse statement also holds if $h_{s+1}$ maps $x_1$ to $y_2$. Since the branches of $x$ (and the branches of $y$) are not isomorphic to each other, any isomorphism $H$ from $Tree_\mathcal{A} (a_0)$ to $Tree_\mathcal{B} (b_0)$ extending $h_s$ must agree with $h_{s+1}$ on $x_1$ and $x_2$. 

Thus, it is apparent that $h_{s+1}$, once defined on all elements in $tree_\mathcal{A} (a_0|s+1)$, is an isomorphism from $Tree_\mathcal{A} (a_0|s+1)$ to $Tree_\mathcal{B} (b_0|s+1)$. Furthermore, there must exist an isomorphism $H$ from $Tree_\mathcal{A} (a_0)$ to $Tree_\mathcal{B} (b_0)$ that extends $h_{s+1}$, as the branching functions and the branch isomorphism functions prevent us from ``making a mistake" throughout the construction. Hence, $h_{s+1}$ is a proper extension of $h_s$ for all stages $s$, and once $h_s(x)$ is defined at a stage $s$, it is never redefined again. Therefore, $h=lim_s h_s$ exists and is an isomorphism from $Tree_\mathcal{A} (a_0)$ to $Tree_\mathcal{B} (b_0)$.\\

\textbf{Claim 2}. The isomorphism $h$ is a computable function.\\

\emph{Proof of Claim 2}. Let $x \in tree_\mathcal{A} (a_0)$. To determine $h(x)$ we run through the stages of the construction until $h$ is defined on $x$. By the assumption that the branching function and branch isomorphism function for both structures are computable, we can easily see that the construction is computable at every stage. Therefore, we can effectively determine the image of $x$ under $h$.\\

The construction of a computable isomorphism $h$ from $exTree_\mathcal{A} (c_1)$ to $exTree_\mathcal{B} (d_1)$ is almost identical to the one presented above. The only difference is that at stage $1$, after mapping $c_1$ to $d_1$ at stage $0$, we must then determine via the branching functions whether $c_1$ and $d_1$ have non-cyclic pre-images. If they don't, then $exTree_\mathcal{A} (c_1)$ and $exTree_\mathcal{B} (d_1)$ are trivially computably isomorphic. Otherwise, we find the non-cyclic pre-images of both $c_1$ and $d_1$ (which, of course, can be done computably), then define $h_1$ as a map from the non-cyclic pre-image of $c_1$ to that of $d_1$.
\end{proof}

It is worth noting that the construction in Lemma 2.9 can be done without the explicit assumption that $\beta_\mathcal{B}$ and $iso_\mathcal{B}$ are computable. \\

We conclude this section with our main theorem, which gives a general sufficient condition for a (2,1):1 structure to be computably categorical.

\begin{thm}
Let $\mathcal{A}=(A,f)$ be a computable (2,1):1 structure without $\mathbb{Z}$-chains and with $\beta_{\mathcal{A}}$ and $iso_{\mathcal{A}}$ computable. If for each $k \in \omega$, $\mathcal{A}$ has only finitely many k-cycles, then $\mathcal{A}$ is computably categorical.
\end{thm}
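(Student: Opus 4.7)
The plan is to construct a computable isomorphism $h:\mathcal{A}\to\mathcal{B}$ by a back-and-forth procedure, where $\mathcal{B}$ is an arbitrary computable copy of $\mathcal{A}$. The construction proceeds in two layers: an outer layer that pairs each K-cycle of $\mathcal{A}$ with a K-cycle of $\mathcal{B}$ together with a chosen rotation, and an inner layer that extends each cycle-level matching to an isomorphism of the surrounding exclusive trees using Lemma 2.9.

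As a preliminary I would observe that, in the absence of $\mathbb{Z}$-chains, the property ``$x$ is cyclic'' is computable. Given $x$, iterating $f$ produces the sequence $x,f(x),f^2(x),\ldots$, which must eventually repeat since every orbit is a K-cycle; checking whether the first repeated value equals $x$ decides cyclicity, and we can simultaneously read off the cycle length $k$ and the canonical representatives of the cycle. The analogous facts hold in $\mathcal{B}$ using $g$. We thereby obtain effective enumerations of the K-cycles of both structures, indexed by cycle length.

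The main construction maintains, at each stage $s$, a finite partial isomorphism $h_s$ together with a finite set $\mathcal{M}_s$ of committed matchings $(K^\mathcal{A},K^\mathcal{B},r)$, where $r\in\{0,\ldots,k-1\}$ is the chosen rotation between cyclic elements of the paired $k$-cycles. Alternating in back-and-forth fashion, at each stage we handle the least element of $A$ outside $\mathrm{dom}(h_s)$ and the least element of $B$ outside $\mathrm{ran}(h_s)$. If the element lies in a K-cycle already recorded in $\mathcal{M}_s$, we extend $h_s$ by invoking Lemma 2.9 on the corresponding pair of exclusive trees with the prescribed rotation. If it lies in a new K-cycle $K_a$ of size $k$, we invoke a matching procedure that dovetails Lemma 2.9--style constructions on every candidate pair $(K^\mathcal{B},r)$ where $K^\mathcal{B}$ is an as-yet-unmatched $k$-cycle in $\mathcal{B}$. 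An incorrect candidate eventually produces a refutation at some finite level --- either a visible disagreement between $\beta_\mathcal{A},iso_\mathcal{A}$ and the number and structure of pre-images observed in $\mathcal{B}$, or a back-and-forth conflict in which an element of $\mathcal{B}$ cannot be accommodated under the candidate. We then add to $\mathcal{M}_s$ the lexicographically least candidate not yet refuted.

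The main obstacle is ensuring that $h=\bigcup_s h_s$ is truly computable rather than merely $\Delta^0_2$: once a matching has been committed, it must never be withdrawn. This is precisely where the finiteness hypothesis is used. For each new $K_a$ there are only finitely many candidate pairs $(K^\mathcal{B},r)$; each incorrect one is refuted in finitely many stages by the detection mechanism above; and because $\mathcal{A}\cong\mathcal{B}$, at least one correct candidate is present and is never refuted. Hence the lex-least unrefuted candidate stabilizes after a finite (though not uniformly computable) number of stages, at which point committing it is safe. Once committed, further handling of $K_a$ proceeds through Lemma 2.9 without interference. The resulting $h$ is total by the standard back-and-forth coverage of both $A$ and $B$, and computable because to evaluate $h(a)$ for any $a\in A$ one simulates the construction until $K_a$ enters $\mathcal{M}_s$ and then reads off the value from the Lemma 2.9 extension.
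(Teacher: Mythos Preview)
Your outline contains a real gap in the cycle-matching step. You commit, at stage $s$, to the lexicographically least candidate $(K^{\mathcal{B}},r)$ not yet refuted, and you also assert that commitments must never be withdrawn. These two are inconsistent unless every incorrect candidate is \emph{eventually} refuted before you commit. But your refutation mechanism does not guarantee this: you have $\beta_{\mathcal{A}}$ and $iso_{\mathcal{A}}$ computable, not $\beta_{\mathcal{B}}$ or $iso_{\mathcal{B}}$. If under a wrong candidate some $x$ on the $\mathcal{A}$-side has $\beta_{\mathcal{A}}(x)=2$ while its partner $y$ in $\mathcal{B}$ has only one $g$-preimage, you will simply wait forever for a second preimage of $y$ and never see a ``visible disagreement''; the back-and-forth on the $\mathcal{B}$-side does not help either, since every element of the (sparser) $K^{\mathcal{B}}$ can still be accommodated inside the (richer) $K_a$. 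So an incorrect but lex-small candidate may sit unrefuted indefinitely, and your procedure as written either stalls or commits to it and later has to retract, making $h$ at best $\Delta^0_2$.

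The paper's proof sidesteps this entirely by using non-uniformity. For each fixed $k$ there are only finitely many $k$-cycles in $\mathcal{A}$ and in $\mathcal{B}$, so a correct bijection between their cyclic elements (respecting the cycle structure and isomorphism type of attached exclusive trees) is a finite piece of data that can be hard-coded into the isomorphism. Once that matching is given, Lemma~2.9 extends it computably over each exclusive tree, and one takes the union over $k$. No search, no refutation, no stabilization argument is needed; computable categoricity only asks that \emph{some} computable isomorphism exist, not that it be found uniformly from indices. Your argument can be repaired in exactly this way: drop the dovetailing search and simply assert the finite matching non-uniformly for each $k$, then invoke Lemma~2.9.
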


\begin{proof}
Suppose that $\mathcal{A}$ is a (2,1):1 structure as described above, and $\mathcal{B}$ is a computable structure isomorphic to $\mathcal{A}$. For each $k \in \omega$, $\mathcal{A}$ has only finitely many k-cycles, and thus $A$ has only finitely many cyclic elements $c_1,...,c_n$ in those k-cycles, which we can computably identify. So we can non-uniformly and isomorphically map each of these cyclic elements $c_i$ in $A$ to a corresponding cyclic element $d_i$ in $B$ via a computable function. Then, by Lemma 2.9, we can construct a computable isomorphism $h_{i,k}$ from $exTree_\mathcal{A}(c_i)$ to $exTree_\mathcal{B}(d_i)$ for $1 \leq i \leq n$. Let $h_k=\bigcup_i h_{i,k}$. Then $h_k$ is a computable isomorphism from the k-cycles in $\mathcal{A}$ to those in $\mathcal{B}$.

Repeat the procedure above for each $k \in \omega$, and let $h = \bigcup_k h_k$. Since $\mathcal{A}$ has no $\mathbb{Z}$-chains, every element in $A$ is in some k-cycle of $\mathcal{A}$. So, $h: A \to B$ is a computable isomorphism from $\mathcal{A}$ to $\mathcal{B}$. Thus, $\mathcal{A}$ is computably categorical.
\end{proof}

\section{Examples}

In this section, we present some examples of (2,1):1 structures with various computability-theoretic properties. Our first example illustrates our point from Section 2 that computability of a (2,1):1 structure does not guarantee computability of its branching function (nor its branch isomorphism function). 

\begin{prop}
There exists a computable (2,1):1 structure $\mathcal{A}=(A,f)$ such that $\beta_\mathcal{A}$ is not computable.
\end{prop}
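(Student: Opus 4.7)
The strategy is to code the halting problem $K$ into the branching function $\beta_{\mathcal{A}}$. Fix a computable enumeration $\{K_s\}_{s \in \omega}$ of $K$ with $K_0 = \emptyset$, and a computable partition $\omega = \bigsqcup_{n} A_n$ into infinite pieces, each listed in increasing order as $A_n = \{a_0^n, a_1^n, a_2^n, \ldots\}$. Each $A_n$ will be the vertex set of one connected component of $\mathcal{A}$, with $a_0^n$ a fixed point of $f$. The designated element $x_n := a_1^n$ will satisfy $\beta_{\mathcal{A}}(x_n) = 2$ exactly when $n \in K$; since $n \mapsto x_n$ is computable, this gives a many-one reduction of $K$ to $\{x : \beta_{\mathcal{A}}(x) = 2\}$, forcing $\beta_{\mathcal{A}}$ to be noncomputable.

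I would carry out a stage-by-stage construction in which, at stage $s$, for each $n \leq s$ we define $f$ on the next as-yet unassigned element of $A_n$. We always set $f(a_0^n) = a_0^n$ and $f(a_1^n) = a_0^n$ first, so that $\beta_{\mathcal{A}}(a_0^n) = 2$ automatically, and then extend the main chain upward by $f(a_{k+1}^n) = a_k^n$ so long as $n \notin K_s$. At the first stage $s_n$ at which we observe $n \in K_{s_n}$, with the main chain currently reaching $a_m^n$, we divert the next element $a_{m+1}^n$ to form a second pre-image of $x_n$ via $f(a_{m+1}^n) := a_1^n$. From that point on, the remaining elements of $A_n$ are used alternately to extend the main chain above $a_m^n$ and the new side chain above $a_{m+1}^n$, so that both chains become infinite. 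If $n$ never enters $K$, the construction uses all of $A_n$ to extend the single main chain forever.

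For the verification, each $a_k^n$ is processed at a stage computable from $(n,k)$, and at that stage the decision only consults the finite set $K_s$, so $f$ is total and computable. A routine check shows that every vertex ends up with exactly one or two pre-images, so $\mathcal{A}$ is a genuine computable (2,1):1 structure. Finally, $x_n$ always has its main-chain pre-image $a_2^n$, and picks up the extra pre-image $a_{m+1}^n$ exactly when $n \in K$, so $\beta_{\mathcal{A}}(x_n) = 2 \iff n \in K$, completing the reduction. The only delicate point in the construction is that both chains must continue to grow after a side branch is attached, since otherwise the topmost element of an abandoned chain would have no pre-image and violate the (2,1):1 condition; the alternating-extension scheme above is the simplest remedy that preserves the (2,1):1 property while keeping $f$ computable.
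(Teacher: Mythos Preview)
Your proof is correct and follows essentially the same strategy as the paper: code a c.e.\ noncomputable set into $\Lambda_{\mathcal{A}}$ so that a designated element acquires a second pre-image precisely when the corresponding number is enumerated, and then keep every branch growing forever so the (2,1):1 condition is preserved. The only cosmetic difference is that the paper carries this out inside a single $1$-cycle (with $x\in C \iff \beta_{\mathcal{A}}(x)=2$), whereas you use infinitely many $1$-cycle components, one per index $n$; both implementations yield the desired many-one reduction and hence the noncomputability of $\beta_{\mathcal{A}}$.
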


\begin{proof}
Our goal is to construct a computable (2,1):1 structure $\mathcal{A}$ such that $\Lambda_\mathcal{A}$ is not a computable set. So let $C$ be some computably enumerable set that contains $0$ and is not computable. Then $C$ has a partial computable characteristic function $\chi_C$ such that $\chi_C(x)=1$ if $x \in C$, and $\chi_C(x)=\uparrow$ if $x \not\in C$ (i.e.,$\chi_C$ computes forever, that is, never halts on input $x$). In stages, we build $\mathcal{A}=(A,f)$ to be a single 1-cycle.\\

\emph{Stage 0}: Let $A_0=\{0\}$, and let $f_0(0)=0$.

\emph{Stage 1}: Let $A_1=\{0,1\}$, and let $f_1(0)=0$ and $f_1(1)=0$.

\emph{Stage s+1}: Suppose we have $A_s$ and $f_s$ from stage $s$. Find the least $a$ such that

\begin{itemize}
\item $a \in I_{\mathcal{A}_{s-1}}$, and
\item $\chi_{C,s}(a)\downarrow=1$ (i.e., $\chi_C$ halts and equals 1 on input $a$ in at most $s$ steps of its computation).
\end{itemize}

If no such $a$ exists, simply extend $A_s$ to $A_{s+1}$ and $f_s$ to $f_{s+1}$ by attaching one new number (not already in $A_s$) to each number in $extree_{\mathcal{A}_s}(0|s)$. Then move on to the next stage.

If such an $a$ does exist, take the least number $x_0$ not already in $A_s$ and define $f_{s+1}(x_0)=a$. Denote the level of $x_0$ in $extree_{\mathcal{A}_s}(0)$ by $l$, and extend the \emph{Tree} of $x_0$ by attaching $s-l$ new numbers $(x_1, x_2,...,x_{s-l})$ to $x$ such that $f_{s+1}(x_i)=x_{i-1}$ for $1 \leq i \leq s-l$. Then extend $exTree_{\mathcal{A}_s}(0,s)$ to $exTree_{\mathcal{A}_s}(0,s+1)$ by attaching one new number to each number in $extree_{\mathcal{A}_s}(0|s)$. Define $A_{s+1}=A_s \cup extree_{\mathcal{A}_s}(0|s+1)$ and continue extending $f_s$ to $f_{s+1}$ accordingly. Then move on to the next stage.

Finally, let $A = \bigcup_s A_s$ and $f=\bigcup_s f_s$. This completes the construction of $\mathcal{A}$. We must now verify two claims.\\

\textbf{Claim 1}: $\mathcal{A}=(A,f)$ is a computable (2,1):1 structure.\\

\emph{Proof of Claim 1}: We have that $A=\omega$, and is thus clearly computable. To compute $f(x)$, we simply run through the construction until we reach the stage $s$ where $x$ appears, and then determine $f_s(x)$. Due to the construction, once $f_s$ is defined on an element, we never redefine it at a later stage. Thus, $f_s(x)=f(x)$, and $f$ is computable. Therefore, $\mathcal{A}$ is computable.

To see that $\mathcal{A}$ is a (2,1):1 structure, first observe that $0$ has exactly two pre-images, $0$ and $1$. Also note that at every stage, we extend the exclusive tree of $0$ by one level, so every element has at least one pre-image. The only instance where an element is given an additional pre-image is if it had exactly one pre-image, so no element has more than two pre-images. Thus, every element either has exactly one or exactly two pre-images, making $\mathcal{A}$ a (2,1):1 structure.\\

\textbf{Claim 2}: The branching function is not computable.\\

\emph{Proof of Claim 2}: Observe that $x \in C$ if and only if $x$ has two pre-images, which is if and only if $\beta_\mathcal{A}(x)=2$. Thus, $C$ is computable if and only if $\beta_\mathcal{A}$ is computable. However, $C$ is not a computable set by assumption. Therefore, $\beta_\mathcal{A}$ cannot be computable.
\end{proof}

The following example demonstrates that computability of the branching function does not imply computability of the branch isomorphism function.

\begin{prop}
There exists a computable (2,1):1 structure $\mathcal{A}=(A,f)$ such that $\beta_\mathcal{A}$ is computable but $iso_\mathcal{A}$ is not computable.
\end{prop}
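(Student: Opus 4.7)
The plan is to build a computable (2,1):1 structure $\mathcal{A}$ in which membership of $n$ in a fixed c.e., non-computable set $C$ is coded into the branch-isomorphism status of a designated split hair $x_n$, while arranging for every $\beta$-value to be readable from a bounded segment of the enumeration of $C$. Fix such a $C$ together with a computable enumeration $C = \bigcup_s C_s$, and for each $n,i$ set $\sigma_n(i) = 1$ if $n \in C_i \setminus C_{i-1}$ (with $C_{-1} = \emptyset$), and $\sigma_n(i) = 0$ otherwise. Then $\sigma_n$ is uniformly computable in $n,i$, has at most one $1$, and $\sigma_n \equiv 0$ iff $n \notin C$.

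For each $n$ I assign a disjoint connected component $G_n$: a $1$-cycle $c_n$ (so $f(c_n)=c_n$), a non-cyclic predecessor $x_n$ of $c_n$ (so $\beta_{\mathcal{A}}(c_n) = 2$), and two distinct pre-images $a_n, b_n$ of $x_n$ (so $\beta_{\mathcal{A}}(x_n) = 2$). Above $a_n$ I place the ``chain-with-markers'' tree $T^L_n$: an infinite chain $a_n = a_n^{(0)} \leftarrow a_n^{(1)} \leftarrow \cdots$ along which each $a_n^{(i)}$ has a second pre-image $\mu^L_{n,i}$, and above each marker $\mu^L_{n,i}$ sits an $\omega$-chain. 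Above $b_n$ I place the analogous tree $T^R_n$ with markers $\mu^R_{n,i}$, except that whenever $\sigma_n(i) = 1$ the sub-tree above $\mu^R_{n,i}$ is a full infinite binary tree (all $\beta = 2$) rather than an $\omega$-chain. All elements are allocated into $\omega$ via a standard computable pairing, and $f$ is computable: identify the input's intended role, and in the single case where the role depends on $C$ (some element above $\mu^R_{n,i}$), compute $\sigma_n(i)$ to pick between the $\omega$-chain rule and the binary-tree rule.

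The three things left to verify are that $\mathcal{A}$ is a computable (2,1):1 structure (immediate from the construction, since every element has one or two pre-images and $f$ is computable), that $\beta_{\mathcal{A}}$ is computable (every $\beta$-value is determined by role, with the only $C$-sensitive cases given by $\sigma_n(i)$, which is computable via a bounded search in the enumeration), and that $iso_{\mathcal{A}}(x_n) = 1$ iff $n \notin C$. For the third, if $\sigma_n \equiv 0$ then $T^L_n$ and $T^R_n$ are literally the same structure up to the obvious bijection, giving an isomorphism. If instead $\sigma_n(j) = 1$, an induction on $i < j$ shows that any candidate isomorphism $\phi : T^L_n \to T^R_n$ must map $a_n^{(i)}$ to $b_n^{(i)}$: at each such level the pre-image multiset of $a_n^{(i)}$ (resp.\ $b_n^{(i)}$) has $\beta$-values $\{2,1\}$, which forces the unique $\beta = 2$ pre-image (the chain successor) on each side to correspond. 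But then at level $j$, the pre-image multiset of $a_n^{(j)}$ has $\beta$-values $\{2,1\}$ while that of $b_n^{(j)}$ has $\{2,2\}$, contradicting $\beta$-preservation. Hence $\{n : iso_{\mathcal{A}}(x_n) = 0\} = C$, and since $C$ is not computable neither is $iso_{\mathcal{A}}$.

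The main obstacle is to commit to the branching function in advance while still making the paired sub-trees isomorphic exactly when $n \notin C$. The chain-with-markers design resolves this by concentrating all $C$-dependent structural variation at the single level (if any) at which $n$ enters $C$, so each $\beta$-value is decided by a bounded search in $C$'s enumeration; at the same time, the resulting change of marker type from an $\omega$-chain to a full binary tree is immediately $\beta$-visible, so as soon as $n$ is enumerated the two trees cease to be isomorphic and remain so forever.
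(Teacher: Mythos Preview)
Your proof is correct, and it takes a genuinely different route from the paper. The paper builds a single $1$-cycle and diagonalizes directly against every partial computable $\varphi_e$ via a finite-injury priority argument: each $\varphi_e$ is assigned a level of the exclusive tree, and if $\varphi_e$ ever guesses $1$ on a witness $x$ at its level, the construction spoils the isomorphism between the two branches of $x$ by making one side sprout $\beta=1$ nodes and the other $\beta=2$ nodes at the next level. Computability of $\beta_{\mathcal{A}}$ is secured by a parity trick (even numbers receive two pre-images, odd numbers one). Your construction instead codes a fixed c.e.\ non-computable set $C$ directly: you split the structure into infinitely many $1$-cycle components $G_n$, and arrange that $iso_{\mathcal{A}}(x_n)=0$ iff $n\in C$, with $\beta$ kept computable because the only $C$-dependent branching decisions sit above the markers $\mu^R_{n,i}$ and depend only on whether $n$ entered $C$ at the specific stage $i$, a computable bit. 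Your approach is more elementary---no priority machinery, and the reduction $\overline{C}\leq_m iso_{\mathcal{A}}$ is explicit---while the paper's argument produces a structure with a single orbit, which is a slightly stronger conclusion. One small wording point: your induction really establishes $\phi(a_n^{(i)})=b_n^{(i)}$ for all $i\leq j$ (the inductive step from $i$ to $i+1$ is valid whenever $i<j$, so the conclusion holds at $i=j$), after which the $\beta$-multiset mismatch $\{2,1\}$ versus $\{2,2\}$ among the pre-images of $a_n^{(j)}$ and $b_n^{(j)}$ gives the contradiction; the argument is sound as written, but stating the range of the induction as $i\le j$ would be cleaner.
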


\begin{proof}
We wish to construct a computable (2,1):1 structure $\mathcal{A}$ such that $\beta_\mathcal{A}$ is computable, but no computable function $\varphi_e$ computes the branch isomorphism function $iso_\mathcal{A}$. We will accomplish this by building $\mathcal{A}$ using a standard priority argument to ensure that for all $e \in \omega$, the following requirement $P_e$ is satisfied:

\begin{center}
$P_e: \varphi_e \not= iso_\mathcal{A}$
\end{center}

We start with an effective enumeration of all partial computable functions $\{\varphi_e\}_{e \in \omega}$. Our desired structure $\mathcal{A}=(A,f)$ will again be a single 1-cycle, which we will construct in stages as follows.\\

\emph{Stage 0}: Define $A_0=\{0,2\}$, $f_0(0)=0$ and $f_0(2)=0$.

\emph{Stage s+1}: Suppose we have $A_s$ and $f_s$ from the previous stage. Let $M_s$ denote the lowest level of the exclusive tree of 0 at the end of stage $s$, i.e., $M_s$ is the unique number such that $extree_{\mathcal{A}_s}(0|M_s) \not= \emptyset$, and for all $n>M_s$, $extree_{\mathcal{A}_s}(0|n) = \emptyset$.

First, assign $\varphi_e$ to level $M_s$, where $e$ is the least number such that $\varphi_e$ has not been assigned to a level of the exclusive tree at a previous stage. Let $L_j$ denote the level of $extree_{\mathcal{A}_s}(0)$ that $\varphi_j$ has been assigned to, so $L_e=M_s$. (We will only assign a partial computable function to a level of the exclusive tree that contains only elements with two pre-images.) Then, find the least $i \leq e$ such that:

\begin{itemize}
\item $\varphi_{i,s}(x)\downarrow=1$ for some $x \in L_i$, and
\item $P_i$ has not yet received attention.
\end{itemize}

If no such $i$ exists, extend $A_s$ to $A_{s+1}$, and $f_s$ to $f_{s+1}$, by attaching two unused even numbers as pre-images to every number in $extree_{\mathcal{A}_s}(0|M_s)$. Set $M_{s+1}=M_s +1$ and go on to the next stage.

If such an $i$ exists, we say that $P_i$ \emph{requires attention}. If $x \not \in extree_{\mathcal{A}_s}(0|M_s)$, let $x_1$ and $x_2$ be the distinct pre-images of $x$, with $x_1 < x_2$. Attach two unused odd numbers to every element that is in both $tree_{\mathcal{A}_s}(x_1)$ and $extree_{\mathcal{A}_s}(0|M_s)$, and attach two unused even numbers to every element that is in both $tree_{\mathcal{A}_s}(x_2)$ and $extree_{\mathcal{A}_s}(0|M_s)$. If $x \in extree_{\mathcal{A}_s}(0|M_s)$, then simply attach one unused odd number and one unused even number to $x$. In either case, repeat the procedure for every number in the same level of the exclusive tree of $0$ as $x$. Now, the $(M_s+1)^{th}$ level of the exclusive tree of $0$ is complete.

Then, to every odd number in $extree_{\mathcal{A}_s}(0|M_s+1)$, attach exactly one unused even number. To every even number in $extree_{\mathcal{A}_s}(0|M_s+1)$, attach exactly two unused even numbers. This completes the $(M_s+2)^{nd}$ level of the exclusive tree of $0$. Let\\
$$A_{s+1}=A_s \cup extree_{\mathcal{A}_s}(0|M_s+1) \cup extree_{\mathcal{A}_s}(0|M_s+2),$$ 
extend $f_s$ to $f_{s+1}$ as described above, and set $M_{s+1}=M_s+2$. At this point, $P_i$ has \emph{received attention} and we move on to the next stage.

This ends the construction. Let $A = \bigcup_s A_s$ and $f=\bigcup_s f_s$. We must now prove the following two claims.\\

\textbf{Claim 1}: The structure $\mathcal{A}=(A,f)$ is a computable (2,1):1 structure with $\beta_\mathcal{A}$ computable.\\

\emph{Proof of Claim 1}: By construction, $A=\omega$. To compute $f(a)$, we simply run through the construction until we reach the stage $s$ where $a$ appears, then determine $f_s(a)$. Due to the construction, once $f_s$ is defined on an element, we never redefine it at a later stage. Thus, $f_s(a)=f(a)$, and $\mathcal{A}$ is computable. 

To see that $\mathcal{A}$ is a (2,1):1 structure, observe that every even number has exactly two pre-images, and every odd number has exactly one pre-image. This also proves that $\beta_\mathcal{A}$ is a computable function.\\

\textbf{Claim 2}: The branch isomorphism function is not computable.\\

\emph{Proof of Claim 2}: We prove by induction that each requirement $P_e$ is satisfied. At stage $1$, $\varphi_0$ is assigned to level $L_0=M_0=1$ of the exclusive tree of $0$, so $\varphi_0$ is assigned to the single number $2$. If $\varphi_{0,s}(2)\downarrow=1$ for some stage $s$, then $P_0$ would require attention at stage $s$. However, due to the construction, the isomorphism on the branches of 2 would be ruined at stage $s$, and thus $iso_{\mathcal{A}_t}(2)=0$ for all stages $t>s$, and $iso_\mathcal{A}(2)=0 \not=1=\varphi_0(2)$. Hence, $P_0$ is satisfied. Otherwise, $\varphi_{0,s}(2)\not=1$ for any stage $s$ and thus $\varphi_0(2)\not=1$. But the only requirement that can ruin the isomorphism on the branches of $2$ is $P_0$. (Any requirement receiving attention only ruins the isomorphism on the branches of the elements in its assigned level, due to the symmetry of the construction.) Thus, for all stages $s$, $iso_{\mathcal{A}_s}(2)=1$, which means that $iso_{\mathcal{A}}(2)=1\not=\varphi_{0,s}(2)$. Again, $P_0$ is satisfied.

Now suppose that for all $i<e$, $P_i$ is satisfied. At stage $e+1$, $\varphi_e$ is assigned to some level $L_e$. If there do not exist a stage $s$ and a number $x$ in level $L_e$ such that $\varphi_{e,s}(x)\downarrow=1$, then $P_e$ is satisfied since $iso_\mathcal{A}(x)=1$ for all $x \in extree_{\mathcal{A}}(0|L_e)$ by the same argument as before. Otherwise, let $t$ be the first stage at which for all $i<e$, $P_i$ does not require attention and $\varphi_{e,t}(x)\downarrow=1$ for some $x$ in level $L_e$. Then at stage $t$, $P_e$ would require attention, and the construction would ensure that $iso_{\mathcal{A}_r}(x)=0$ for all stages $r>t$. So, $\varphi_e(x) \not= iso_{\mathcal{A}}(x)$ and again, $P_e$ would be satisfied. Therefore, all requirements are satisfied and $iso_\mathcal{A}$ is not computable.
\end{proof}

The next example illustrates what can go wrong if we relax one of the conditions in Theorem 2.10, and allow a (2,1):1 structure to have infinitely many K-cycles of one size.

\begin{prop}
There exists a computable (2,1):1 structure $\mathcal{A}=(A,f)$ with no $\mathbb{Z}$-chains such that $\beta_\mathcal{A}$ and $iso_\mathcal{A}$ are computable, but $\mathcal{A}$ is not computably categorical.
\end{prop}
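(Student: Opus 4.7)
The plan is to exhibit two isomorphic computable copies $\mathcal{A}$ and $\mathcal{B}$, both consisting entirely of 1-cycles of two distinct isomorphism types $T_A$ and $T_B$, arranged so that the partition of components by type is computable in $\mathcal{A}$ but codes a c.e.\ non-computable set in $\mathcal{B}$; this will keep $\beta_\mathcal{A}$ and $iso_\mathcal{A}$ computable while blocking any computable isomorphism from $\mathcal{A}$ to $\mathcal{B}$.

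Let $T_A$ be the 1-cycle whose cyclic element $c$ has a single $\omega$-chain attached (so every non-cyclic element has $\beta = 1$), and let $T_B$ be the 1-cycle whose unique non-cyclic preimage $b_0$ of $c$ satisfies $\beta(b_0) = 2$ with two $\omega$-chain preimages. These types are non-isomorphic since they disagree on $\beta$ of the non-cyclic preimage of $c$. I will build $\mathcal{A}$ by laying infinitely many $T_A$- and infinitely many $T_B$-components along a fixed computable partition of $\omega$, so that each element's $\beta$-value can be read off its position within its block; thus $\beta_\mathcal{A}$ is computable, and $iso_\mathcal{A}$ is computable because the only elements with $\beta = 2$ are cyclic elements (where $iso = 0$ by the remark following Definition 2.8) and the $b_0$'s of $T_B$-components (where $iso = 1$, since both subtrees are isomorphic $\omega$-chains).

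For $\mathcal{B}$, fix a c.e.\ non-computable set $K \subseteq \omega$ with both $K$ and $\omega \setminus K$ infinite, and build $\mathcal{B} = (\omega, g)$ in stages. At stage $s$, introduce a fresh component $C_s$ initially configured as a $T_A$ (cyclic element $c_s$, non-cyclic preimage $a_s$, then a developing $\omega$-chain), and extend every previously introduced $\omega$-chain by one new element. In addition, whenever some $n \leq s$ enters $K$ at stage $s$, attach a fresh second preimage to $a_n$ and from that point forward extend its new $\omega$-chain by one element per stage. Since the construction only declares $g$ on newly introduced numbers and never modifies previous declarations, $g$ is total and computable; the final component $C_n$ is of type $T_B$ exactly when $n \in K$, and since both $K$ and its complement are infinite we have $\mathcal{B} \cong \mathcal{A}$.

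Finally, to conclude non-categoricity, assume for contradiction that $h : \mathcal{A} \to \mathcal{B}$ is a computable isomorphism. Since $h$ preserves $\beta$, it maps $T_A$-components of $\mathcal{A}$ to $T_A$-components of $\mathcal{B}$. The set $S_A$ of cyclic elements of $T_A$-components in $\mathcal{A}$ is computable by construction, and $h$ is a computable bijection with computable inverse, so its image $h(S_A)$ is computable as well. But $h(S_A) = \{c_n : n \notin K\}$, and together with the computable injection $n \mapsto c_n$ this would make $\omega \setminus K$ computable, contradicting the choice of $K$. The main delicate point I expect is the bookkeeping in the construction of $\mathcal{B}$: the conversion of $C_n$ from $T_A$ to $T_B$ must be accomplished purely by attaching new elements, so that no previously declared value of $g$ is ever modified and $g$ remains a genuine computable function built monotonically across stages.
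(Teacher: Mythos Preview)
Your proof is correct and follows essentially the same strategy as the paper: build $\mathcal{A}$ as infinitely many 1-cycles of two distinguishable isomorphism types with the type computably readable, then build an isomorphic $\mathcal{B}$ in which the type of the $n$th component codes membership in a non-computable c.e.\ set, so that any computable isomorphism would decide that set. The only cosmetic difference is your choice of the two component types (a single $\omega$-chain versus one that branches once at level one), whereas the paper uses the even simpler pair ``empty exclusive tree'' versus ``degenerate tree,'' which makes $iso_\mathcal{A}$ identically $0$; both choices work for the same reason.
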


\begin{proof}
We shall first present a computable (2,1):1 structure $\mathcal{A}=(A,f)$ with the desired properties, and then construct a computable isomorphic structure $\mathcal{B}$ that is not computably isomorphic to $\mathcal{A}$. 

Let $\mathcal{A}=(A,f)$ be the (2,1):1 structure where $A=\omega$ and $f:A \to A$ is defined as follows:

$$f(x)=
		\begin{cases}
			x &\text{if } x=0 \text{ or } x \text{ is odd},\\
			x-1 &\text{if } x \equiv 2 \text{ (mod } 4),\\
			\frac{x}{2} &\text{otherwise.}
		\end{cases}$$
\begin{center}
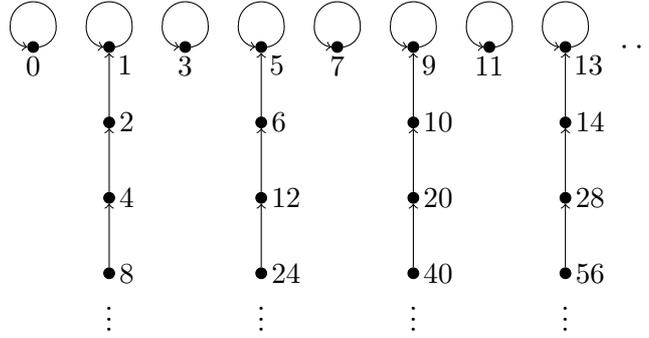

\captionof{figure}{The directed graph of $\mathcal{A}$.}
\begin{tikzpicture}[inner sep=0.5mm, vertex/.style={circle,draw,fill=black}, every label/.style={black, font=\small}]

\node[vertex](A) at (1,0)[label=below:0]{};
\node[vertex](B) at (2,0)[label=below right:1]{};
\node[vertex](C) at (3,0)[label=below:3]{};
\node[vertex](D) at (4,0)[label=below right:5]{};
\node[vertex](E) at (5,0)[label=below:7]{};
\node[vertex](F) at (6,0)[label=below right:9]{};
\node[vertex](G) at (7,0)[label=below:11]{};
\node[vertex](H) at (8,0)[label=below right:13]{};
\node at (9,0) {$\cdots$};

\node[vertex](B1) at (2,-1)[label=right:2]{};
\node[vertex](B2) at (2,-2)[label=right:4]{};
\node[vertex](B3) at (2,-3)[label=right:8]{};
\node(B4) at (2,-3.5){$\vdots$};

\node[vertex](D1) at (4,-1)[label=right:6]{};
\node[vertex](D2) at (4, -2)[label=right:12]{};
\node[vertex](D3) at (4, -3)[label=right:24]{};
\node(D4) at (4, -3.5){$\vdots$};

\node[vertex](F1) at (6,-1)[label=right:10]{};
\node[vertex](F2) at (6,-2)[label=right:20]{};
\node[vertex](F3) at (6,-3)[label=right:40]{};
\node(F4) at (6,-3.5){$\vdots$};

\node[vertex](H1) at (8,-1)[label=right:14]{};
\node[vertex](H2) at (8,-2)[label=right:28]{};
\node[vertex](H3) at (8,-3)[label=right:56]{};
\node(H4) at (8,-3.5){$\vdots$};

\draw[-] (A) to [bend right=45](1.3,0.3);
\draw[-] (1.3,0.3) to [bend right=45](1,0.6);
\draw[-] (1,0.6) to [bend right=45](0.7,0.3);
\draw[->] (0.7,0.3) to [bend right=45](A);

\draw[-] (B) to [bend right=45](2.3,0.3);
\draw[-] (2.3,0.3) to [bend right=45](2,0.6);
\draw[-] (2,0.6) to [bend right=45](1.7,0.3);
\draw[->] (1.7,0.3) to [bend right=45](B);

\draw[-] (C) to [bend right=45](3.3,0.3);
\draw[-] (3.3,0.3) to [bend right=45](3,0.6);
\draw[-] (3,0.6) to [bend right=45](2.7,0.3);
\draw[->] (2.7,0.3) to [bend right=45](C);

\draw[-] (D) to [bend right=45](4.3,0.3);
\draw[-] (4.3,0.3) to [bend right=45](4,0.6);
\draw[-] (4,0.6) to [bend right=45](3.7,0.3);
\draw[->] (3.7,0.3) to [bend right=45](D);

\draw[-] (E) to [bend right=45](5.3,0.3);
\draw[-] (5.3,0.3) to [bend right=45](5,0.6);
\draw[-] (5,0.6) to [bend right=45](4.7,0.3);
\draw[->] (4.7,0.3) to [bend right=45](E);

\draw[-] (F) to [bend right=45](6.3,0.3);
\draw[-] (6.3,0.3) to [bend right=45](6,0.6);
\draw[-] (6,0.6) to [bend right=45](5.7,0.3);
\draw[->] (5.7,0.3) to [bend right=45](F);

\draw[-] (G) to [bend right=45](7.3,0.3);
\draw[-] (7.3,0.3) to [bend right=45](7,0.6);
\draw[-] (7,0.6) to [bend right=45](6.7,0.3);
\draw[->] (6.7,0.3) to [bend right=45](G);

\draw[-] (H) to [bend right=45](8.3,0.3);
\draw[-] (8.3,0.3) to [bend right=45](8,0.6);
\draw[-] (8,0.6) to [bend right=45](7.7,0.3);
\draw[->] (7.7,0.3) to [bend right=45](H);

\draw[->] (B1) to (B);
\draw[->] (B2) to (B1);
\draw[->] (B3) to (B2);

\draw[->] (D1) to (D);
\draw[->] (D2) to (D1);
\draw[->] (D3) to (D2);

\draw[->] (F1) to (F);
\draw[->] (F2) to (F1);
\draw[->] (F3) to (F2);

\draw[->] (H1) to (H);
\draw[->] (H2) to (H1);
\draw[->] (H3) to (H2);

\end{tikzpicture}
\end{center}

This structure $\mathcal{A}$ is easily seen to be a computable (2,1):1 structure. We can also see that $\beta_\mathcal{A}(x)=2$ if $x \equiv 1$ (mod 4), and $\beta_\mathcal{A}(x)=1$ otherwise. So the function $\beta_\mathcal{A}$ is clearly computable as well. The branch isomorphism function is trivially computable, since $iso_\mathcal{A}(x)=0$ for all $x \in \Lambda_\mathcal{A}$. Finally, $\mathcal{A}$ is composed entirely of 1-cycles, and thus contains no $\mathbb{Z}$-chains. Thus, $\mathcal{A}$ has all of the desired properties.

Let $H= \{e: \varphi_e(e)\downarrow\}$ denote the halting set. We build an isomorphic copy $\mathcal{B} = (B,g)$ in stages as follows.\\

\emph{Stage 0}: Let $B_0=\{0\}$ and let $g_0(0)=0$.

\emph{Stage s+1}: Suppose we are given $B_s$ and $g_s$ from stage $s$. Find the least $e \leq s$ such that:\\

\begin{itemize}
\item $2e \in I_{\mathcal{B}_s}$
\item $g_s(2e)=2e$, and
\item $\varphi_{e,s}(e)\downarrow$
\end{itemize}

If no such $e$ exists, extend $B_s$ to $B_{s+1}$ and $g_s$ to $g_{s+1}$ by defining $g_{s+1}(2(s+1))=2(s+1)$. This adds a 1-cycle to $\mathcal{B}_s$. Also, extend any existing degenerate trees that are attached to a 1-cycle by adding an unused odd number to the end of each one. Go on to the next stage.

If such an $e$ exists, extend $B_s$ to $B_{s+1}$ and $g_s$ to $g_{s+1}$ in the following manner. Attach a degenerate tree of height $s$, composed entirely of unused odd numbers, to the 1-cycle containing $2e$. Then, extend all existing degenerate trees attached to a 1-cycle by adding an unused odd number to the end of each one. Add a new 1-cycle by defining $g_{s+1}(2(s+1))=2(s+1)$. Then go on to the next stage. 

At the end of the construction, let $B = \bigcup_s B_s$, and $g=\bigcup_s g_s$. It is easy to see that $\mathcal{B}$ is a computable (2,1):1 structure. Also, observe that $x \in \Lambda_\mathcal{B}$ if and only if $\frac{x}{2} \in H$. Since $H$ is not computable, both $H$ and $\overline H$ are infinite, which means that $\mathcal{B}$ has infinitely many 1-cycles with degenerate trees attached and infinitely many 1-cycles with empty trees attached, as does $\mathcal{A}$. Thus, $\mathcal{A} \cong \mathcal{B}$.

However, $\mathcal{B}$ cannot be computably isomorphic to $\mathcal{A}$. This is because $\Lambda_\mathcal{A}$ is computable but $\Lambda_\mathcal{B}$ is not, as the computability of $\Lambda_\mathcal{B}$ would imply the computability of $H$. A computable isomorphism from $A$ to $B$ would preserve the computability of the split hair set, so no such computable isomorphism can exist. Therefore, $\mathcal{A}$ is not computably categorical.
\end{proof}

Note that the conditions in Theorem 2.10 are sufficient for computable categoricity, but not necessary. By Theorem 1.3, it is possible for a (2,1):1 structure to have infinitely many K-cycles of the same size $K$ and still be computably categorical.

\end{document}